\newtheorem{theorem}{Theorem}[section]
\newtheorem{lemma}{Lemma}[section]
\newtheorem{remark}{Remark}[section]
\newtheorem{assumption}{Assumption}[section]
\newtheorem{definition}{Definition}[section]
\numberwithin{equation}{section}
\newcommand{\Rd}{\mathbb{R}^d}
\newcommand{\ve}{\varepsilon}
\newcommand{\sxs}{\sigma_{(\xi_s)}}
\newcommand{\bxs}{b_{(\xi_s)}}
\newcommand{\szs}{\sigma_{(\eta_s)}}
\newcommand{\bzs}{b_{(\eta_s)}}
\newcommand{\sxxs}{\sigma_{(\xi_s)(\xi_s)}}
\newcommand{\bxxs}{b_{(\xi_s)(\xi_s)}}
\newcommand{\sxt}{\sigma_{(\xi_t)}}
\newcommand{\uxt}{u_{(\xi_t)}}
\newcommand{\uzt}{u_{(\eta_t)}}
\newcommand{\uxxt}{u_{(\xi_t)(\xi_t)}}
\newcommand{\px}{\psi_{(\xi)}}
\newcommand{\pxt}{\psi_{(\xi_t)}}
\newcommand{\psk}{\psi_{(\sigma^k)}}
\newcommand{\mypsi}{\psi_{(\sigma^i)}}
\newcommand{\pxsop}{\frac{\psi_{(\xi)}^2}{\psi}}
\newcommand{\pxtsop}{\frac{\psi_{(\xi_t)}^2}{\psi}}
\newcommand{\pxsops}{\frac{\psi_{(\xi)}^2}{\psi^2}}
\newcommand{\pxtsops}{\frac{\psi_{(\xi_t)}^2}{\psi^2}}
\newcommand{\uxzxz}{|u_{(\xi_0)}(x_0)|}
\newcommand{\uxzxzxz}{|u_{(\xi_0)(\xi_0)}(x_0)|}
\newcommand{\bwxzxz}{\mathrm{B}_1(x_0,\xi_0)}
\newcommand{\btxzxz}{\mathrm{B}_2(x_0,\xi_0)}
\newcommand{\sbwxzxz}{\sqrt{\mathrm{B}_1(x_0,\xi_0)}}
\newcommand{\sbtxzxz}{\sqrt{\mathrm{B}_2(x_0,\xi_0)}}
\newcommand{\uxxobw}{\frac{|u_{(\xi)(\xi)}(x)|}{\mathrm{B}_1(x,\xi)}}
\newcommand{\uxxobt}{\frac{|u_{(\xi)(\xi)}(x)|}{\mathrm{B}_2(x,\xi)}}
\newcommand{\uxosbw}{\frac{|u_{(\xi)}(x)|}{\sqrt{\mathrm{B}_1(x,\xi)}}}
\newcommand{\uxosbt}{\frac{|u_{(\xi)}(x)|}{\sqrt{\mathrm{B}_2(x,\xi)}}}
\begin{document}
\title[Quasiderivative method for derivative estimates]{Quasiderivative method for derivative estimates of solutions to degenerate elliptic equations}
\author{Wei Zhou}
\address{127 Vincent Hall, 206 Church St. SE, Minneapolis, MN 55455}

\maketitle

\begin{abstract}
\noindent
We give an example of quasiderivatives constructed by random time change, Girsanov's Theorem and Levy's Theorem. As an application,  we investigate the smoothness and estimate the derivatives up to second order for the probabilistic solution to the Dirichlet problem for the linear degenerate elliptic partial differential equation of second order, under the assumption of non-degeneracy with respect to the normal to the boundary and an interior condition to control the moments of quasiderivatives, which is weaker than non-degeneracy. 
\end{abstract}

\section{Introduction and Background}


We consider the Dirichlet problem for the linear degenerate elliptic partial differential equation of second order
\begin{equation}
\left\{\begin{array}{rcll}
Lu(x)-c(x)u(x)+f(x)&=&0  &\text{in }D\\ 
u&=&g  &\text{on }\partial D,
\end{array}
\right.
\label{1a}
\end{equation}
where $Lu(x):=a^{ij}(x)u_{x^ix^j}(x)+ b^i(x)u_{x^i}(x)$, with $a=(1/2)\sigma\sigma^*$, and summation convention is understood. The probabilistic solution of (\ref{1a}) is known as
\begin{equation}
u(x)=E\bigg[g\big(x_{\tau}(x)\big)e^{-\phi_\tau}+\int_0^{\tau}f\big(x_t(x)\big)e^{-\phi_t}dt\bigg], \label{1b}
\end{equation}
$$\mbox{with }\phi_t=\int_0^tc(x_s(x))ds.$$
where $x_t(x)$ is the solution to the It\^o equation
\begin{equation}\label{1aa}
x_t=x+\int_0^t\sigma(x_s)dw_s+\int_0^tb(x_s)ds
\end{equation}
and $\tau=\tau_D(x)$ is the first exit time of $x_t(x)$ from $D$.

If we know a priori that $u\in C^2(D)\cap C(\bar D)$ and $u$ solves (\ref{1a}), then $u$ satisfies (\ref{1b}) via It\^o's formula, which implies the uniqueness of the solution of (\ref{1a}) provided the uniqueness of the solution of (\ref{1aa}). However, in general, $u$ defined by (\ref{1b}) doesn't necessarily have first and second derivatives in the differential operator $L$, especially when the diffusion term $a$ is degenerate, and the differential equation is understood in a generalized sense. We are interested in knowing under what conditions $u$ defined by (\ref{1b}) is twice differentiable and does satisfy (\ref{1a}). 

The accumulation of the research on the existence, uniqueness and regularity of degenerate elliptic or parabolic partial differential equations has become vast. See, for example, H\"{o}rmander \cite{MR0222474}, Kohn-Nirenberg \cite{MR0234118} and Oleinik-Radkevich \cite{MR0457908}, in which analysis techniques for PDEs are used. For probabilistic approaches, we refer to Freidlin \cite{MR833742} and Stroock-Varadhan \cite{MR0387812}, to name a few. 

Our approach, quasiderivative method, is also probabilistic. The concept of quasiderivative was first introduced by N. V. Krylov in \cite{MR965890} (1988), in which this probabilistic technique is applied to find weaker and more flexible conditions on $\sigma$, $b$ and $c$ such that $u$ in (\ref{1b}) is twice continuously differentiable in manifolds without boundary. Since then, this technique has been applied to investigate the smoothness of solutions of various elliptic and parabolic partial differential equations. The first derivatives of various linear elliptic and parabolic PDEs have been estimated under various conditions in Krylov \cite{MR1180379} (1992), \cite{MR1268004} (1993) and \cite{MR2144644} (2004), where each case was treated by its particular choice of quasiderivatives. In Krylov \cite{MR2082053} (2004), a unified quasiderivative method is presented, while $\sigma$ and $b$ are assumed to be constant. As far as the applications to nonlinear equations, for example, in Krylov \cite{MR992979} (1989), derivative estimates  are obtained when controlled diffusion processes and consequently fully nonlinear elliptic equations are considered. 

Compared to the operators considered in \cite{MR965890,MR1180379,MR1268004,MR2144644,MR2082053}, the differential equation in this article is more general. The differential operator $L$ in (\ref{1a}) is the general linear elliptic differential operator, and $c$ and $f$ are non-trivial. Also, we estimate the derivatives up to the second order, not just the first order. More presicely, our main target is investigating first derivatives of $u$ if we only assume  $f, g\in C^{0,1}(\bar D)$, as well as the second derivatives therein when assuming $f, g\in C^{1,1}(\bar D)$. Note that, in these cases, one cannot assert that the first and second derivatives of $u$ are bounded up to the boundary (for example, see Remark 1.0.2 and Example 4.2.1 in \cite{MR2144644}). One can only expect to prove that inside $D$ the derivatives of $u$ exist. We show that under our assumptions, the first and second derivatives of $u$ in (\ref{1b}) exist almost everywhere in $D$,  which implies the existence and uniqueness for the Dirichelet problem for the associated linear degenerate elliptic partial differential equation (\ref{1a}) in our setting. We also obtain first and second derivative estimates.

This article is organized as follows: In Section 2, we review the concept of quasiderivative and give an example of it. In Section 3, we take this approach to  show the existence of, and then estimate, the first and second derivatives of $u$ in (\ref{1b}), under the assumption of the non-degeneracy of $a$ with respect to the normal to the boundary and an interior condition to control the moments of quasiderivatives, which is weaker than the nondegeneracy of the diffusion term $a$ and necessary under the aforementioned assumption.

To conclude this section, we introduce the notation: Above we have already defined $C^k(\bar{D}),  k=1 \text{\ or\ } 2$, as the space of bounded continuous and $k$-times continuously differentiable functions in $\bar{D}$ with finite norm given by
\[ |g|_{1,D}=|g|_{0,D}+|g_x|_{0,D},\ \  |g|_{2,D}=|g|_{1,D}+|g_{xx}|_{0,D},\]
respectively, where 
\[|g|_{0,D}=\sup_{x\in D}|g(x)|,\]
$g_x$ is the gradient vector of $g$, and $g_{xx}$ is the Hessian matrix of $g$. For $\alpha\in(0,1]$, the H\"older spaces $C^{k,\alpha}(\bar D)$ are defined as the subspaces of $C^k(\bar D)$ consisting of functions with finite norm
$$|g|_{k,\alpha,D}=|g|_{k,D}+[g]_{\alpha,D},\ \ \mbox{ where }[g]_{\alpha, D}=\sup_{x,y\in D}\frac{|g(x)-g(y)|}{|x-y|^\alpha}.$$

Throughout the article, the summation convention for repeated indices is assumed, and we always put the index in the superscript, since the subscript is for the time variable of stochastic processes. 

We let $\Rd$ is the $d$-dimensional Euclidean space with $x = (x^1, x^2, . . . , x^d )$ representing a typical point in $\Rd$, and $(x, y) = x^iy^i$ is the inner product for $x, y \in \Rd$. For $x,y,z\in\Rd$, set
\begin{align*}
u_{(y)}=&u_{x^i}y^i,\ \  u_{(y)(z)}=u_{x^i x^j}y^i z^j,\ \ u_{(y)}^2=(u_{(y)})^2.
\end{align*}
For any matrix $\sigma=(\sigma^{ij})$,
$$\|\sigma\|^2:=\mathrm{tr}(\sigma\sigma^*).$$
For any $s,t\in\mathbb R$, we define
\[s\wedge t=\min(s,t),\ \  s\vee t=\max(s,t).\]
Constants $K, N$ and $\lambda$ appearing in inequalities are usually not indexed. They may differ even in the same chain of inequalities.

\section{Definition and Examples of Quasiderivative}


In what follows, we consider the It\^{o} stochastic equation
\begin{equation} 
x_t=x+\int_0^t \sigma^i(x_s)dw_s^i+\int_0^t b(x_s)ds \label{2a}
\end{equation}
on a given complete probablity space $(\Omega,\mathcal{F},P)$, where $x\in\Rd$, $\sigma^i$ and $b$ are (nonrandom) $\Rd$-valued functions with bounded domain $D$ in $\Rd$, defined for $i=1,..., d_1$ with $d_1$ possibly different from $d$, and $w_t:=(w_t^1,..., w_t^{d_1})$ is a $d_1$-dimensional Wiener process with respect to a given increasing filtration $\{\mathcal{F}_t, t\ge 0\}$ of $\sigma$-algebras $\mathcal{F}_t\subset\mathcal{F}$, such that $\mathcal{F}_t$ contain all $P$-null sets. We denote by $\sigma$ the $d\times d_1$ matrix composed of the column-vectors $\sigma^i$, $i=1,..., d_1$. We also assume that $\sigma$ and $b$ are twice continuously differentiable in $\Rd$. Based on the assumptions above, for any $x\in D$, it is known that equation (\ref{2a}) has a unique solution $x_t(x)$ on $[0,\tau(x))$, where
\begin{equation*}
\tau(x)=\inf\{t\ge0 : x_t(x)\notin D\} \qquad (\inf\{\varnothing\}:=\infty).
\end{equation*}

\begin{definition}
We write \[u\in\mathcal{M}^k(D,\sigma,b)\]
if $u$ is a real-valued $k$ times continuously differentiable function given on $\bar{D}$ such that the process $u(x_t(x))$ is a local $\{\mathcal{F}_t\}$-martingale on $[0,\tau(x))$ for any $x\in D$. 
\end{definition}

We abbreviate $\mathcal{M}^k(D,\sigma,b)$ by $\mathcal{M}^k(D)$, or simply $\mathcal{M}^k$ when this will cause no confusion.

\begin{definition} \label{2b}
Let $x\in D$, and let $\gamma$ be a stopping time, such that $\gamma\le\tau(x)$. Assume that $\xi\in\mathbb{R}^d$, $\xi_t$ and $\xi_t^0$ are adapted continuous processes defined on $[0,\gamma]\cap[0,\infty)$ with values in $\mathbb{R}^d$ and $\mathbb{R}$, respectively, such that $\xi_0=\xi$. 

We say that $\xi_t$ is a \textup{\textbf{first quasiderivative}} of $x_t$ in the direction of $\xi$ at point $x$ on $[0,\gamma]$ if for any $u\in\mathcal{M}^1(D,\sigma,b)$ the following process
\begin{equation} \label{2f}
\uxt(x_t(x))+\xi_t^0 u(x_t(x))
\end{equation}
is a local martingale on $[0,\gamma]$. In this case the process $\xi_t^0$ is called a \textup{\textbf{first adjoint process}} for $\xi_t$. If $\gamma=\tau(x)$ we simply say that $\xi_t$ is a first quasiderivative of $x_t(y)$ in D in the direction of $\xi$ at $x$. 
\end{definition}

It is worth mentioning that the first adjoint process is not unique for the first quasiderivative in general. All of the first adjoint processes we consider in this article are local martingales with initial value 0.

\begin{definition} \label{2bb}
Under the assumptions of Definition  \ref{2b}, additionally assume that $\eta\in\mathbb{R}^d$, $\eta_t$ and $\eta_t^0$ are adapted continuous processes defined on $[0,\gamma]\cap[0,\infty)$ with values in $\mathbb{R}^d$ and $\mathbb{R}$, respectively, such that $\eta_0=\eta$. 

We say that $\eta_t$ is a \textup{\textbf{second quasiderivative}} of $x_t$ associated with $\xi_t$ and $\xi_t^0$ in the direction of $\eta$ at point $x$ on $[0,\gamma]$ if for any $u\in\mathcal{M}^2(D,\sigma,b)$ the following process
\begin{equation} \label{2g}
\uxxt(x_t(x))+ \uzt(x_t(x)) + 2\xi_t^0 \uxt(x_t(x))+\eta_t^0 u(x_t(x)),
\end{equation}
$$\mbox{ where }\xi_t\mbox{ and }\xi_t^0\mbox{ are first quasiderivative and first adjoint process.}$$
is a local martingale on $[0,\tau)$. In this case the process $\eta_t^0$ is called a \textup{\textbf{second adjoint process}} for $\eta_t$. If $\gamma=\tau(x)$ we simply say that $\eta_t$ is a second quasiderivative of $x_t(y)$ associated with $\xi_t$ in D in the direction of $\eta$ at $x$. 
\end{definition}

Similarly, the second adjoint process is not unique for the second quasiderivative in general. All second adjoint processes we consider in this article are local martingales with initial value 0.

Now let us consider
$$u(x)=Eg\big(x_\tau(x)\big),$$
that is, we temporarily let $f=c=0$ in (\ref{1b}). Based on the definitions above, if $u\in C^2(\bar{D})$, then the strong Markov property of $x_t(x)$ implies that $u\in\mathcal{M}^2(D)$, and the usual first and second ``derivatives'' with respect to $x$ of the process $x_t(x)$, which are defined as the solutions of the It\^o equations
\begin{equation*}
\xi_t=\xi+\int_0^t\sxs^k(x_s)dw_s^k+\int_0^t\bxs(x_s)ds
\end{equation*}
\begin{equation*}
\eta_t=\eta+\int_0^t\Big[\sxxs^k(x_s)+\szs^k(x_s)\Big]dw_s^k+\int_0^t\Big[\bxxs(x_s)+\bzs(x_s)\Big]ds\
\end{equation*}
are first and second quasiderivatives with zero adjoint processes. This means, the ``quasiderivative'' of a given stochastic process is a generalization of the usual ``derivative'' of the stochastic process.

Now we additionally assume that the domain $D$ is of class $C^2$ with $\partial D$ bounded, $\tau(x)<\infty$ (a.s.), and $g$ is twice continuously differentiable on $\partial D$. We abbreviate $\tau(x)$ to $\tau$. If the process (\ref{2f}) is a uniformly integrable martingale on $[0,\tau]$ and $\xi_{\tau}$ is tangent to $\partial D$ at $x_{\tau}(x)$ (a.s.), then we have
\begin{equation}\label{uxi}
u_{(\xi)}(x)=E[u_{(\xi_\tau)}(x_\tau)+\xi_\tau^0u(x_\tau)]=E[g_{(\xi_\tau)}(x_\tau)+\xi_\tau^0g(x_\tau)].\end{equation}
This shows how we can apply first quasiderivatives to get interior estimates of $u_{(\xi)}$ through $|g|_{1,D}$ or $|g|_{1,\partial D}$.

As far as second derivatives are concerned, first notice that
\[4u_{(\xi)(\zeta)}(x)=u_{(\xi+\zeta)(\xi+\zeta)}(x)-u_{(\xi-\zeta)(\xi-\zeta)}(x).\]
So to estimate $u_{(\xi)(\zeta)}(x), \forall\xi,\zeta\in\Rd$, it suffices to estimate $u_{(\xi)(\xi)}(x), \forall\xi\in\Rd$.

Again, if the process (\ref{2g}) is a uniformly integrable martingale on $[0,\tau]$, $\xi_{\tau}$ and $\eta_{\tau}$ are tangent to $\partial D$ at $x_{\tau}(x)$ (a.s.), then by letting $\eta=0$, we have
\begin{align*}
u_{(\xi)(\xi)}(x)=&u_{(\xi)(\xi)}(x)+u_{(\eta)}(x)\\
=&E[u_{(\xi_\tau)(\xi_\tau)}(x_\tau)+ u_{(\eta_\tau)}(x_\tau)+2\xi_\tau^0 u_{(\xi_\tau)}(x_\tau)+\eta_\tau^0 u(x_\tau)]\\
=&E[g_{(\xi_\tau)(\xi_\tau)}(x_\tau)+u_{(n(x_\tau))}(x_\tau)\cdot h_{(\xi_\tau)(\xi_\tau)}(x_\tau)+ g_{(\eta_\tau)}(x_\tau)\\
&+2\xi_\tau^0 g_{(\xi_\tau)}(x_\tau)+\eta_\tau^0 g(x_\tau)],
\end{align*}
where $n(x)$ is the unit inward normal at $x\in\partial D$ and $h(x) : T_x(\partial D)\rightarrow\mathbb{R}$ is a local representation of $\partial D$ as a graph over tangent space of $\partial D$ at $x$. (Notice that it is different from the first order case that generally $u_{(\xi_\tau)(\xi_\tau)}(x_\tau)\ne g_{(\xi_\tau)(\xi_\tau)}(x_\tau)$.) Since $D$ is of class $C^2$ and $\partial D$ is bounded,
$$h_{(\xi_\tau)(\xi_\tau)}(x_\tau)\le N|\xi_\tau|^2,$$
where $N$ is a positive constant depending on the domain $D$. This shows how we can apply second quasiderivatives to get interior estimates of $u_{(\xi)(\zeta)}$ through $|g|_{2,D}$, or even $|g|_{2,\partial D}$, provided that $u_{(n(y))}(y)$ can be estimated on $\partial D$ in terms of $|g|_{2,D}$ or $|g|_{2,\partial D}$.

It is also worth mentioning that $\eta_{\tau}$ need not be tangent to $\partial D$ at $x_{\tau}(x)$, provided that we can control the moments of $\eta_{t\wedge\tau}$ and estimate the normal derivative of $u$, because we can represent $\eta_{\tau}$ as the sum of the tangential component and the normal component.

The discussion above motivates us on attempting to construct as many quasiderivatives as possible. 

\begin{theorem} \label{2c}
Let $r_t, \hat{r}_t,\pi_t, \hat{\pi}_t, P_t,\hat{P}_t$ be jointly measurable adapted processes with values in $\mathbb{R}$, $\mathbb{R}$, $\mathbb{R}^{d_1}$, $\mathbb{R}^{d_1}$, $\mathrm{Skew}(d_1,\mathbb{R})$, $\mathrm{Skew}(d_1,\mathbb{R})$, respectively, where $\mathrm{Skew}(d_1,\mathbb{R})$ denotes the set of $d_1\times d_1$ skew-symmetric real matrices. Assume that 
\[\int_0^T(|r_t|^4+|\hat{r}_t|^2+|\pi_t|^4+|\hat{\pi}_t|^2+|P_t|^4+|\hat{P}_t|^2)dt<\infty\]
for any $T\in [0,\infty)$. For $x\in D$, $\xi\in\Rd$ and $\eta\in\Rd$, on the time interval $[0,\infty)$, define the processes $\xi_t$ and $\eta_t$ as solutions of the following (linear) equations:
\allowdisplaybreaks\begin{align}
\xi_t=\xi  +  \int_0^t\Big[&\sxs+r_s\sigma+\sigma P_s\Big]dw_s\label{2d}+  \int_0^t\Big[\bxs+2r_sb-\sigma \pi_s\Big]ds,
\end{align}
\begin{equation}\label{2e}
\begin{gathered}
\eta_t=\eta +  \int_0^t\Big[\szs+\hat{r}_s\sigma+\sigma\hat{P}_s +\sxxs +  2r_s\sxs\\
\qquad\qquad\ \ \ \ +2\sxs P_s +2r_s\sigma P_s-r_s^2\sigma+\sigma P_s^2\Big]dw_s\\
\qquad\ \  +  \int_0^t\Big[\bzs+2\hat{r}_s b-\sigma\hat{\pi}_s+\bxxs+4r_s\bxs\\
\qquad\ \ \ -2\sxs\pi_s-2r_s\sigma\pi_s-2\sigma P_s \pi_s\Big]ds,
\end{gathered}
\end{equation}
where in $\sigma, b$ and their derivatives we dropped the argument $x_s(x)$. Also define:
\begin{align}
\xi^0_t=&\int_0^t\pi_sdw_s, \label{2j}\\
\eta^0_t=&(\xi_t^0)^2-\langle\xi^0\rangle_t+\int_0^t\hat{\pi}_sdw_s. \label{2k}
\end{align}
Then $\xi_t$ is a first quasiderivative of $x_t(y)$ in $D$ in the direction of $\xi$ at $x$ and $\xi_t^0$ is a first adjoint process for $\xi_t$, and $\eta_t$ is a second quasiderivative of $x_t(y)$ associated with $\xi_t$ in $D$ in the direction of $\eta$ at $x$ and $\eta_t^0$ is a second adjoint process for $\eta_t$.
\end{theorem}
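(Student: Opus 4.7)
The plan is to verify Definitions~\ref{2b} and \ref{2bb} by direct application of It\^o's formula, reducing each local-martingale assertion to a drift cancellation. The key algebraic input is that if $u\in\mathcal{M}^k$ is smooth enough for It\^o to apply, then the drift of $u(x_s(x))$ equals $Lu(x_s(x))$; its vanishing along every trajectory, together with continuity of $Lu$ and arbitrariness of $x\in D$, forces $Lu\equiv 0$ on $D$. Differentiating this pointwise identity in arbitrary directions $y,z\in\Rd$ gives $(Lu)_{(y)}\equiv 0$, and for $u\in\mathcal{M}^2$ also $(Lu)_{(y)(z)}\equiv 0$, on $D$. These identities are the cancellation rules throughout. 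Existence and uniqueness of $\xi_t$ and $\eta_t$ as strong solutions of the linear SDEs (\ref{2d}) and (\ref{2e}) on $[0,\infty)$ follow from standard theory under the integrability hypothesis on $r,\hat r,\pi,\hat\pi,P,\hat P$.

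For the first quasiderivative, set $v_t:=\uxt(x_t)+\xi_t^0 u(x_t)$ and apply It\^o. The drift splits into four groups by auxiliary process. Terms containing none of $r_s,\pi_s,P_s$ combine to $(Lu)_{(\xi_t)}(x_t)=0$ after one uses the symmetries of $u_{x^ix^k}$ and $u_{x^ix^kx^l}$ and the identity $\sigma_{(\xi)}^{kj}\sigma^{lj}u_{x^kx^l}=a_{(\xi)}^{kl}u_{x^kx^l}$. The $r_s$-terms sum to $2r_s\cdot Lu(x_t)=0$. For the $\pi_s$-terms, the drift $-u_{x^i}(\sigma\pi_s)^i$ deliberately injected into (\ref{2d}) is exactly cancelled by the cross covariation $d\langle\xi^0,u(x_\cdot)\rangle_s$ arising from (\ref{2j}). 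The $P_s$-terms reduce to $u_{x^ix^k}(x_t)\sigma^{kj}(x_t)(\sigma P_s)^{ij}$; since $P_s^*=-P_s$, the coefficient tensor is antisymmetric in $(i,k)$ and contracts to zero against the symmetric Hessian $u_{x^ix^k}$. Hence $v_t$ has zero drift and is a local martingale, with $\xi_t^0$ as first adjoint.

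For the second quasiderivative, first check that $\eta_t^0$ is a local martingale: It\^o applied to $(\xi_t^0)^2$ gives $d(\xi_t^0)^2=2\xi_t^0\pi_s\,dw_s+d\langle\xi^0\rangle_s$, so $d\eta_t^0=(2\xi_t^0\pi_s+\hat\pi_s)\,dw_s$. Then apply It\^o to $V_t:=\uxxt(x_t)+\uzt(x_t)+2\xi_t^0\uxt(x_t)+\eta_t^0 u(x_t)$, expanding in particular the triple product $2\xi_t^0\uxt(x_t)$ and the covariation $d\langle\xi^i,\xi^k\rangle_s$ that appears inside $d[\uxxt(x_t)]$. Organize the drift into three groups: (a) terms without any of $r,\pi,P,\hat r,\hat\pi,\hat P$ combine to $(Lu)_{(\xi)(\xi)}(x_t)+(Lu)_{(\eta)}(x_t)=0$ by the same kind of symmetry identities as above applied to $(Lu)_{(\xi)(\xi)}$; (b) the $\hat r,\hat\pi,\hat P$-terms cancel by the exact analogue of the four mechanisms from the first-quasiderivative step applied now to the pair $(\eta_t,\eta_t^0)$, yielding $2\hat r_s\cdot Lu(x_t)=0$, a $\hat\pi$-type covariation cancellation with the $-\sigma\hat\pi_s$ in (\ref{2e}), and a $\hat P$-skew-symmetry cancellation; (c) the remaining $r,\pi,P$-terms come from the triple product, from the nine cross terms in the expansion of $d\langle\xi,\xi\rangle_s$, and from the explicit corrections $\sxxs+2r_s\sxs+2\sxs P_s+2r_s\sigma P_s-r_s^2\sigma+\sigma P_s^2$ and $\bxxs+4r_s\bxs-2\sxs\pi_s-2r_s\sigma\pi_s-2\sigma P_s\pi_s$ injected into (\ref{2e}). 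The coefficients are chosen precisely so that each cross term is killed, with the residual algebra reducing to $2r_s\cdot(Lu)_{(\xi)}(x_t)=0$ and further $P_s$-skew-symmetry collapses.

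The main obstacle is the bookkeeping in part (c). The covariation $d\langle\xi^i,\xi^k\rangle_s$ alone produces nine cross terms from the three-part diffusion $\sxs+r_s\sigma+\sigma P_s$, and the triple product $2\xi_t^0\uxt(x_t)$ yields six distinct covariation pairs in It\^o. Matching every one of these to a specific correction term in (\ref{2e}) --- for instance, the $-r_s^2\sigma$ compensates the $r_s^2\sigma\sigma^*$ piece of $(M_\xi M_\xi^*)^{ij}$, the $\sigma P_s^2$ compensates the $(\sigma P_s)(\sigma P_s)^*$ piece, and the cross pieces $2r_s\sxs$, $2\sxs P_s$, $2r_s\sigma P_s$, $-2\sxs\pi_s$, $-2r_s\sigma\pi_s$, $-2\sigma P_s\pi_s$ handle the mixed and triple-product contributions --- and verifying that no unabsorbable $(Lu)_{(\xi)}$- or $(Lu)_{(\xi)(\xi)}$-type residue remains, is the technical heart of the proof. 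Once this matching is carried out, the identities $Lu=0$, $(Lu)_{(\xi)}=0$, $(Lu)_{(\eta)}=0$, $(Lu)_{(\xi)(\xi)}=0$ together with the symmetries of $u_{x^ix^j}$, $u_{x^ix^jx^k}$, $u_{x^ix^jx^kx^l}$ and the skew-symmetries $P_s^*=-P_s$, $\hat P_s^*=-\hat P_s$ finish the drift cancellation and show that $V_t$ is a local martingale with $\eta_t^0$ as second adjoint.
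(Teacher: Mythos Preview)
Your approach is a direct It\^o verification and differs substantially from the paper's. The paper constructs, for each $\varepsilon$, a perturbed process $y_t(\varepsilon,x)$ built from random time change, Girsanov's theorem, and Levy rotations, together with an exponential martingale $E_t(\varepsilon)$, in such a way that $u(y_t(\varepsilon,x))E_t(\varepsilon)$ is a local martingale for every $u\in\mathcal{M}^k$; differentiating this one-parameter family in $\varepsilon$ at $\varepsilon=0$ then yields the processes (\ref{2f}) and (\ref{2g}) and shows they are local martingales. Your route instead takes the specific form of (\ref{2d})--(\ref{2k}) as given and checks drift cancellation term by term.

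The bookkeeping you outline is correct for sufficiently smooth $u$, but there is a regularity gap. To apply It\^o to $u_{(\xi_t)}(x_t)=u_{x^i}(x_t)\xi_t^i$ you need $u_{x^i}\in C^2$, i.e.\ $u\in C^3$; to apply it to $u_{(\xi_t)(\xi_t)}(x_t)$ you need $u\in C^4$. But Definition~\ref{2b} asks for the conclusion for \emph{every} $u\in\mathcal{M}^1$, and membership in $\mathcal{M}^1$ only means $u\in C^1(\bar D)$ with $u(x_t)$ a local martingale. For such $u$ the expression $Lu$ need not even be defined, so the identities $Lu\equiv 0$, $(Lu)_{(\xi)}\equiv 0$, $(Lu)_{(\xi)(\xi)}\equiv 0$ on which your cancellations rest are unavailable. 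The paper's construction sidesteps this entirely: the local-martingale property of $u(y_t(\varepsilon,x))E_t(\varepsilon)$ is obtained from the three structural transformations and uses only that $u\in\mathcal{M}^k$, while the subsequent $\varepsilon$-differentiation is a pointwise chain rule requiring exactly $u\in C^k$ and no more. There is no obvious repair by approximation, since mollifying $u$ does not in general preserve membership in $\mathcal{M}^k$ in the degenerate setting.
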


\begin{remark}
The processes $r_t$ and $\hat r_t$ come from random time change. The processes $\pi_t$ and $\hat\pi_t$ are due to Girsanov's Theorem on changing the probability space, and the processes $P_t$ and $\hat P_t$ are based on changing the Wiener process based on Levy's Theorem.

Equations (\ref{2d}) and (\ref{2e}) give the most general forms of the first and second quasiderivatives known so far. On one hand, they contain various auxiliary processes, $r_t, \pi_t, P_t$, $\hat{r}_t, \hat{\pi}_t, \hat{P}_t,$ which supply us fruitful quasiderivatives for our applications. On the other hand, in specific applications, many of the auxiliary processes are defined to be zero (processes), which make the equations (\ref{2d}) and (\ref{2e}) shorter.
\end{remark}

\begin{proof}

Mimic the proof of Theorem 3.2.1 in \cite{MR2144644} by replacing $y_t(\ve,x)$ as the solution to the It\^o equation
\begin{align*}
\nonumber dy_t=&\sqrt{1+2\ve r_t+\ve^2 \hat{r}_t}\sigma(y_t)e^{\ve P_t}e^{\frac{1}{2} \ve^2\hat{P}_t}dw_t
+\Big[(1+2\ve r_t+\ve^2 \hat{r}_t)b(y_t)\\
&- \sqrt{1+2\ve r_t+\ve^2 \hat{r}_t}\sigma(y_t)e^{\ve P_t}e^{\frac{1}{2} \ve^2
\hat{P}_t} (\ve\pi_t+\frac{1}{2}\ve^2\hat{\pi}_t) \Big]dt
\end{align*}
with initial condition $y=x+\ve\xi+\frac{1}{2}\ve^2\eta$, and then differentiating the local martingale
\begin{equation*}
u(y_t(\ve, x))\exp\big(\int_0^t(\ve\pi_s+\frac{1}{2}\ve^2\hat{\pi}_s)dw_s-\frac{1}{2}\int_0^t|\ve\pi_s+\frac{1}{2}\ve^2\hat{\pi}_s|^2ds\big)
\end{equation*}
twice which turns out to be a local martingale also.

\end{proof}

\begin{remark}
The auxiliary processes $r_t, \pi_t, P_t, \hat{r}_t, \hat{\pi}_t, \hat{P}_t$ are allowed to depend on $\xi_t$ and $\eta_t$. For instance, assume that $r(x,\xi), \pi(x,\xi)$ and $P(x,\xi)$ are locally bounded functions from $D\times\Rd$ to $\mathbb R$, $\mathbb R^{d_1}$ and $\mathrm{Skew}(d_1,\mathbb R)$, respectly, and they are linear with respect to $\xi$. We similarly assume that $\hat r(x,\xi,\eta), \hat\pi(x,\xi,\eta)$ and $\hat P(x,\xi,\eta)$ are locally bounded functions from $D\times\Rd\times\Rd$ to $\mathbb R$, $\mathbb R^{d_1}$ and $\mathrm{Skew}(d_1,\mathbb R)$, respectly, and they are linear with respect to $\eta$. If we define
$$r_t=r(x_t,\xi_t),\qquad \pi_t=\pi(x,\xi), \qquad P_t=P(x_t,\xi_t),$$
$$\hat r_t=r(x_t,\xi_t,\eta_t),\qquad \hat\pi_t=\hat\pi(x,\xi,\eta), \qquad \hat P_t=\hat P(x_t,\xi_t,\eta_t),$$
then the It\^o equations (\ref{2d}) and (\ref{2e}) have unique solutions, since the diffusion term and drift term in both It\^o equations are linear with respect to $\xi_t$ and $\eta_t$, respectively. As a result,  Theorem \ref{2c} still holds. This is exactly how we construct the quasiderivatives in the next section.
\end{remark}

Before ending this section, we introduce two local martingales to be used in applications. 

\begin{theorem}\label{2cc}
Let $c$, $f$, $g$ and $u$ be real-valued twice continuously differentiable functions in $D$. Suppose that $u$ satisfies  \eqref{1a}. Take the processes $r_t, \hat{r}_t,\pi_t$, $\hat{\pi}_t, P_t,\hat{P}_t$, $\xi_t, \eta_t, \xi_t^0, \eta_t^0$ from Theorem \ref{2c}. Then for any $x\in D$, the processes
\begin{equation}\label{Xi}
X_t:=e^{-\phi_t}\Big[u_{(\xi_t)}(x_t)+\tilde{\xi}_t^0u(x_t)\Big]
+\int_0^te^{-\phi_s}\Big[f_{(\xi_s)}(x_s)+\big(2r_s+\tilde{\xi}_s^0\big)f(x_s)\Big]ds,
\end{equation}
\begin{equation}\label{Zeta}
\begin{gathered}
Y_t:=e^{-\phi_t}\Big[u_{(\xi_t)(\xi_t)}(x_t)+u_{(\eta_t)}(x_t)+2\tilde{\xi}_t^0u_{(\xi_t)}(x_t)+\tilde{\eta}_t^0u(x_t)\Big]\\
\qquad+\int_0^te^{-\phi_s}\Big[f_{(\xi_s)(\xi_s)}(x_s)+f_{(\eta_s)}(x_s)+\big(4r_s+2\tilde{\xi}_s^0\big)f_{(\xi_s)}(x_s)\\
+\big(2\hat{r}_s+4\tilde{\xi}_s^0r_s+\tilde{\eta}_s^0\big)f(x_s)\Big]ds,
\end{gathered}
\end{equation}
with
\begin{align*}
&\phi_t:=\int_0^tc(x_s)ds,\\
&\xi_t^{d+1}:=-\int_0^t\big[c_{(\xi_s)}(x_s)+2r_sc(x_s)\big]ds,\\
&\tilde{\xi}_t^0:=\xi_t^0+\xi_t^{d+1},\\
&\eta_t^{d+1}:=-\int_0^t\big[c_{(\xi_s)(\xi_s)}(x_s)+c_{(\eta_s)}(x_s)+4r_sc_{(\xi_s)}(x_s)+2\hat{r}_sc(x_s)\big]ds,\\
&\tilde{\eta}_t^0:=\eta_t^0+2\xi_t^0\xi_t^{d+1}+(\xi_t^{d+1})^2+\eta_t^{d+1},
\end{align*}
are local martingales on $[0,\tau_D(x))$. (We keep writing $x_t$ in place of $x_t(x)$ and drop this argument in many places.)

\end{theorem}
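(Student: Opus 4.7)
The plan is to reduce Theorem \ref{2cc} to Theorem \ref{2c} by augmenting the state space with two auxiliary coordinates that absorb the zeroth-order term $-cu$ and the forcing $f$, so that $u$ becomes effectively a martingale in the enlarged system and the quasiderivative calculus of Theorem \ref{2c} applies directly.

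First, I would define $\bar{x}_t=(x_t,y_t,z_t)\in\mathbb{R}^{d+2}$, driven by the same $d_1$-dimensional Wiener process, by $dy_t=-c(x_t)\,dt$ and $dz_t=f(x_t)e^{y_t}\,dt$; hence $y_t=-\phi_t$ and $z_t=\int_0^t f(x_s)e^{-\phi_s}\,ds$. Its coefficients are
\[
\bar\sigma(x,y,z)=\begin{pmatrix}\sigma(x)\\0\\0\end{pmatrix},\qquad \bar b(x,y,z)=\begin{pmatrix}b(x)\\-c(x)\\f(x)e^{y}\end{pmatrix}.
\]
Set $\bar u(x,y,z):=e^{y}u(x)+z$. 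A short It\^o computation using $Lu-cu+f=0$ gives $d\bar u(\bar x_t)=e^{-\phi_t}[Lu-cu+f](x_t)\,dt+d(\text{loc.\ mart.})=d(\text{loc.\ mart.})$, so $\bar u\in\mathcal{M}^2(D\times\mathbb{R}^2,\bar\sigma,\bar b)$.

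Next, apply Theorem \ref{2c} to the enlarged system with starting vectors $\bar\xi=(\xi,0,0)$ and $\bar\eta=(\eta,0,0)$, keeping the same auxiliary processes $r_t,\hat r_t,\pi_t,\hat\pi_t,P_t,\hat P_t$. Writing (\ref{2d})--(\ref{2e}) componentwise shows: the first $d$ components of $\bar\xi_t,\bar\eta_t$ satisfy exactly (\ref{2d}) and (\ref{2e}); the $(d{+}1)$-th components coincide with the processes $\xi_t^{d+1}$ and $\eta_t^{d+1}$ defined in the statement; the $(d{+}2)$-th components satisfy
\[
d\xi_t^{d+2}=e^{-\phi_t}\bigl[f_{(\xi_t)}+(2r_t+\xi_t^{d+1})f\bigr](x_t)\,dt
\]
and a similar but longer identity for $\eta_t^{d+2}$; and the adjoints $\bar\xi_t^0,\bar\eta_t^0$ equal $\xi_t^0,\eta_t^0$ since they depend only on $w_t$. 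By Theorem \ref{2c}, both
\[
\bar u_{(\bar\xi_t)}(\bar x_t)+\bar\xi_t^0\bar u(\bar x_t)\ \text{and}\ \bar u_{(\bar\xi_t)(\bar\xi_t)}(\bar x_t)+\bar u_{(\bar\eta_t)}(\bar x_t)+2\bar\xi_t^0\bar u_{(\bar\xi_t)}(\bar x_t)+\bar\eta_t^0\bar u(\bar x_t)
\]
are local martingales on $[0,\tau_D(x))$. Expanding the partial derivatives of $\bar u(x,y,z)=e^{y}u(x)+z$ along $\bar\xi_t$ and $\bar\eta_t$, the first expression equals $e^{-\phi_t}[u_{(\xi_t)}+\tilde\xi_t^0 u](x_t)+\xi_t^{d+2}+\xi_t^0 z_t$; an It\^o computation of $d(\xi_t^0 z_t)$ (no covariation since $z_t$ has zero diffusion) combined with the formula for $d\xi_t^{d+2}$ gives $dX_t+z_t\pi_t\,dw_t$, so $X_t$ is the difference of two local martingales, hence itself a local martingale. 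The same procedure applied to the second quasiderivative expression, now involving also $\bar u_{yy}=e^{y}u$ and $\bar u_{xy}=e^{y}u_{x}$, yields $Y_t$ plus a local-martingale remainder.

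The main obstacle will be the calculation for $Y_t$: equation (\ref{2e}) and the augmented second adjoint (\ref{2k}) generate many cross terms between $\xi_t,\eta_t,\xi_t^{d+1},\xi_t^0,\eta_t^{d+1}$, and assembling them into the precise coefficients $4r_s+2\tilde\xi_s^0$ and $2\hat r_s+4\tilde\xi_s^0 r_s+\tilde\eta_s^0$ appearing in (\ref{Zeta}) requires tight bookkeeping. The algebraic identity $\tilde\eta_t^0=(\tilde\xi_t^0)^2-\langle\tilde\xi^0\rangle_t+\int_0^t\hat\pi_s\,dw_s+\eta_t^{d+1}$, which is precisely what the definition of $\tilde\eta_t^0$ in the statement realizes, is what makes these combinations match.
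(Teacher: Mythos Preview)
Your proposal is correct and follows essentially the same approach as the paper: augment the state by $x_t^{d+1}=-\phi_t$ and $x_t^{d+2}=\int_0^t e^{-\phi_s}f(x_s)\,ds$, set $\bar u(\bar x)=e^{x^{d+1}}u(x)+x^{d+2}$, observe $\bar u\in\mathcal{M}^2$, apply the quasiderivative machinery of Theorem~\ref{2c} to the enlarged process, and then identify the remainder $\xi_t^0 z_t-\int_0^t\xi_s^0\,dz_s$ (and its second-order analogues) as local martingales. The paper invokes Lemma~II.8.5(c) of \cite{MR1311478} for this last step, which is exactly your observation that $z_t$ has no diffusion so the It\^o product rule leaves only a stochastic integral.
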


\begin{proof}
Introduce two additional equations
$$
x_t^{d+1}=-\int_0^tc(x_s)ds, \ \ x_t^{d+2}=\int_0^t\exp(x_s^{d+1})f(x_s)ds.
$$
For $\bar{x}=(x,x^{d+1},x^{d+2})\in D\times\mathbb{R}\times\mathbb{R}$, define
$$
\bar{u}(\bar{x})=\exp(x^{d+1})u(x)+x^{d+2}.
$$
It\^o's formula and the assumption that $a^{ij}(x)u_{x^ix^j}+ b^i(x)u_{x^i}-c(x)u+f(x)=0$ in $D$ imply that $\bar{u}(\bar{x}_t(x,0,0))$ is a local martingale on $[0,\tau_D(x))$. That means, $\bar{u}(\bar{x}_t)\in \mathcal{M}^2$.

According to definitions \ref{2b} and \ref{2bb},
$$
\bar{u}_{(\bar{\xi}_t)}(\bar{x}_t)+\xi_t^0\bar{u}(\bar{x}_t) \mbox{  and  } \bar{u}_{(\bar{\eta}_t)}(\bar{x}_t)+\bar{u}_{(\bar{\xi}_t)(\bar{\xi}_t)}(\bar{x}_t)+2\xi_t^0\bar{u}_{(\bar{\xi}_t)}(\bar{x}_t)+\eta_t^0\bar{u}(\bar{x}_t)
$$ 
are local martingales on $[0,\tau_D(x))$, where $\bar{\xi}_t=(\xi_t,\xi_t^{d+1},\xi_t^{d+2})$ and $\bar{\eta}_t=(\eta_t,\eta_t^{d+1},\eta_t^{d+2})$ are first and second quasiderivatives of $\bar{x}_t((x,0,0))$ in the directions of $\bar\xi=(\xi,0,0)$ and $\bar\eta=(\eta,0,0)$, respectively.

Direct computation leads to
\begin{align*}
&\bar{u}_{(\bar{\xi}_t)}(\bar{x}_t)=\exp(x_t^{d+1})\big[u_{(\xi_t)}(x_t)+\xi_t^{d+1}u(x_t)\big]+\xi_t^{d+2},\\
&\bar{u}_{(\bar{\xi}_t)(\bar{\xi}_t)}(\bar{x}_t)=\exp(x_t^{d+1})\big[u_{(\xi_t)(\xi_t)}(x_t)+2\xi_t^{d+1}u_{(\xi_t)}(x_t)+\big(\xi_t^{d+1}\big)^2u(x_t)\big],\\
&\bar{u}_{(\bar{\eta}_t)}(\bar{x}_t)=\exp(x_t^{d+1})\big[u_{(\eta_t)}(x_t)+\eta_t^{d+1}u(x_t)\big]+\eta_t^{d+2},
\end{align*}
with
\allowdisplaybreaks\begin{align*}
\xi_t^{d+2}=&\int_0^t\exp(x_s^{d+1})\big[f_{(\xi_s)}(x_s)+(\xi_s^{d+1}+2r_s)f(x_s)\big]ds,\\
\eta_t^{d+2}=&\int_0^t\exp(x_s^{d+1})\big[f_{(\xi_s)(\xi_s)}(x_s)+f_{(\eta_s)}(x_s)+(2\xi_s^{d+1}+4r_s)f_{(\xi_s)}(x_s)\\
&+\big((\xi_s^{d+1})^2+\eta_s^{d+1}+4r_s\xi_s^{d+1}+2\hat{r}_s\big)f(x_s)\big]ds.
\end{align*}
It remains to notice that $\xi^0_t$ and $\eta^0_t$ are local martingales, so by Lemma II.8.5(c) in \cite{MR1311478}
$$
\xi^0_tx^{d+2}_t-\int_0^t\xi^0_sdx^{d+2}_s, \xi^0_t\xi^{d+2}_t-\int_0^t\xi^0_sd\xi^{d+2}_s \mbox{ and }\eta^0_tx^{d+2}_t-\int_0^t\eta^0_sdx^{d+2}_s
$$
are local martingales.

\end{proof}

\section{Application of quasiderivatives to derivative estimates of non-homogeneous linear degenerate elliptic equations}

In this section, we investigate the smoothness of $u$ given by (\ref{1b}), which is the probabilistic solution of (\ref{1a}).

To be precise, let $\sigma$, $b$ and $c$ in (\ref{1b}) and (\ref{1aa}) be twice continuously differentiable in $\Rd$, and $c$ be non-negative. Let $D\in C^4$ be a bounded domain in $\Rd$, then there exists a function $\psi\in C^4$ satisfying
$$
\psi>0 \mbox{ in }D,\ \ \psi=0\mbox{ and } |\psi_x|\ge1 \mbox{ on }\partial D.
$$
We also assume that
\begin{equation}\label{psi}
L\psi:=a^{ij}(x)\psi_{x^ix^j}+b^i(x)\psi_{x^i}\le-1 \mbox{ in }D.
\end{equation}
\begin{equation} \label{3setting}
|\sigma^{ij}|_{2,D}+|b^i|_{2,D}+|c|_{2,D}+|\psi|_{4,D}\le K_0,
\end{equation}
with constant $K_0\in[1,\infty)$.

Let $\mathfrak{B}$ be the set of all skew-symmetric $d_1\times d_1$ matrices. For any positive constant $\lambda$, define
$$D_\lambda=\{x\in D: \psi(x)>\lambda\}.$$

\begin{assumption} \label{nd} (non-degeneracy along the normal to the boundary)
\begin{equation*}\label{nondeg}
(an,n)>0 \mbox{ on }\partial D,
\end{equation*}
where $n$ is the unit normal vector.
\end{assumption}

\begin{assumption} \label{ic} (interior condition to control the moments of quasiderivatives, weaker than the non-degeneracy) There exist functions
\begin{itemize}
\item $\rho(x): D\rightarrow\Rd$, bounded in $D_\lambda$ for all $\lambda>0$; 
\item $Q(x,y): D\times\Rd\rightarrow\mathfrak{B}$, bounded with respect to $x$ in $D_\lambda$ for all $\lambda>0, y\in\Rd$ and linear in $y$;
\item $M(x): D\rightarrow \mathbb R$, bounded in $D_\lambda$ for all $\lambda>0$;
\end{itemize}
such that for any $x\in D$ and $|y|=1$,
\begin{equation}\label{inequality}
\begin{gathered}
\big\|\sigma_{(y)}(x)+(\rho(x),y)\sigma(x)+\sigma(x)Q(x,y)\big\|^2+\\
\ 2\big(y,b_{(y)}(x)+2(\rho(x),y)b(x)\big)\le c(x)+M(x)\big(a(x)y,y\big).
\end{gathered}
\end{equation}
\end{assumption}

Our main result is the following:

\begin{theorem}\label{3c}
Define $u$ by \eqref{1b}, in which $x_t(x)$ is the solution of (\ref{1aa}). Suppose that Assumption \ref{nd} and Assumption \ref{ic} are satisfied.
\begin{enumerate}
\item If $f, g\in C^{0,1}(\bar{D})$, then $u\in C^{0,1}_{loc}(D)$, and for any $\xi\in\Rd$,
\begin{equation}
\big|u_{(\xi)}\big|\le N\bigg(|\xi|+\frac{|\psi_{(\xi)}|}{\psi^{\frac{1}{2}}}\bigg)\big(|f|_{0,1,D}+|g|_{0,1,D}\big)\  a.e.\mbox{ in }D,\label{3d}
\end{equation}
where $N=N(K_0,d,d_1, D)$.
\item If $f, g\in C^{1,1}(\bar{D})$, then $u\in C^{1,1}_{loc}(D)$, and for any $\xi\in\Rd$,
\begin{equation}
\big|u_{(\xi)(\xi)}\big|\le N\bigg(|\xi|^2+\frac{\psi_{(\xi)}^2}{\psi}\bigg)\big(|f|_{1,1,D}+|g|_{1,1,D}\big)\ a.e.\mbox{ in }D\label{3dd},
\end{equation}
where $N=N(K_0,d,d_1, D)$. Furthermore, $u$ is the unique solution in $C_{loc}^{1,1}(D)\cap C^{0,1}(\bar D)$ of
\begin{equation}\label{solva}
\left\{\begin{array}{rcll}
Lu(x)-c(x)u(x)+f(x)&=&0  &\text{a.e. in }D\\ 
u&=&g  &\text{on }\partial D.
\end{array}
\right.
\end{equation}
\end{enumerate}
\end{theorem}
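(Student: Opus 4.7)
The plan is to combine a non-degenerate regularization with quasiderivative estimates that are uniform in the regularization parameter. First I would replace $\sigma$ by $\sigma^\ve=(\sigma,\sve I_d)$ (enlarging the Wiener process accordingly) and mollify $f,g$ to smooth data; standard theory for uniformly elliptic operators produces a classical solution $u^\ve\in C^2(\bar D)$ that agrees with the probabilistic formula \eqref{1b} for the regularized diffusion $x^\ve_t$, and $u^\ve\to u$ pointwise in $D$ by SDE stability and bounded convergence. The whole task reduces to proving \eqref{3d}--\eqref{3dd} for $u^\ve$ with constants $N$ independent of $\ve$; interior equicontinuity then transfers the bounds to $u$ and yields $u\in C^{1,1}_{loc}(D)$.

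For the first-derivative bound, fix $x\in D$, $\xi\in\Rd$ and apply Theorem \ref{2c} to $x^\ve_t$ with
$$P_s=Q(x_s,\xi_s),\quad r_s=(\rho(x_s),\xi_s)-\alpha\,\psi_{(\xi_s)}(x_s)/\psi(x_s),\quad \pi_s\text{ tuned to control }\psi_{(\xi_s)}/\sqrt{\psi},$$
where $\rho,Q$ are furnished by Assumption \ref{ic} and $\alpha>0$ is a tuning constant. The correction $-\alpha\psi_{(\xi_s)}/\psi$ is engineered so that, via It\^o's formula and the supersolution property \eqref{psi}, the process $\psi_{(\xi_t)}(x_t)^2/\psi(x_t)$ becomes a non-negative supermartingale on $[0,\tau)$; in particular its limit at $\tau$ is zero, so $\xi_\tau$ is tangent to $\partial D$ at $x_\tau$. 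Substituting these choices into \eqref{2d} and computing $d|\xi_t|^2$, the algebraic inequality \eqref{inequality} gives a drift pointwise dominated by $c(x_t)|\xi_t|^2+M(x_t)(a(x_t)\xi_t,\xi_t)$ plus a martingale differential; combined with the weight $e^{-\phi_t}$, this yields the $\ve$-uniform bound $E[e^{-\phi_{t\wedge\tau}}|\xi_{t\wedge\tau}|^2]\le N(|\xi|^2+\psi_{(\xi)}^2/\psi)$, with an analogous bound for $(\xi_t^0)^2$. Optional stopping applied to the martingale $X_t$ of Theorem \ref{2cc}, tangency (which forces $u^\ve_{(\xi_\tau)}(x_\tau)=g_{(\xi_\tau)}(x_\tau)$), and Cauchy-Schwarz then deliver \eqref{3d}.

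The second-derivative bound runs the same scheme on $Y_t$ of Theorem \ref{2cc}, with $\hat r,\hat\pi,\hat P$ chosen analogously in terms of $(x,\xi,\eta)$ so that $E[e^{-\phi_{t\wedge\tau}}|\eta_{t\wedge\tau}|]$ is controlled by the right-hand side of \eqref{3dd}. The new wrinkle is that $\eta_\tau$ cannot be made tangent to $\partial D$, and its normal component produces a boundary term involving $u^\ve_{(n(x_\tau))}(x_\tau)$; here Assumption \ref{nd} enters, since $(an,n)>0$ on $\partial D$ allows one to bound the normal derivative $u^\ve_{(n)}$ on $\partial D$ uniformly in $\ve$ by $N(|g|_{1,1,D}+|f|_{1,1,D})$ via a barrier argument. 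The curvature estimate $h_{(\xi_\tau)(\xi_\tau)}\le N|\xi_\tau|^2$ noted after \eqref{2g} produces the $\psi_{(\xi)}^2/\psi$ factor in \eqref{3dd}. Uniqueness in $C^{1,1}_{loc}(D)\cap C^{0,1}(\bar D)$ for \eqref{solva} follows from It\^o's formula applied to any such solution along $x_t(x)$, justified in the a.e.\ sense via a Krylov-type occupation-density estimate for the $\ve$-regularized process and passage to $\ve\to 0$. The principal obstacle throughout is the $\ve$-uniform moment control of $\xi_t,\eta_t$ in the degenerate limit: Assumption \ref{ic} is written in the form \eqref{inequality} precisely so that the drift of $|\xi_t|^2$ is pointwise dominated by $c|\xi_t|^2+M(a\xi_t,\xi_t)$, and the delicate step is combining this with the exponential discount $e^{-\phi_t}$ and the supermartingale structure of $\psi_{(\xi_t)}^2/\psi(x_t)$ to close the moment estimates uniformly in $\ve$.
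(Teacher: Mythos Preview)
Your regularization step and the overall architecture (reduce to smooth $u^\ve$, prove $\ve$-uniform quasiderivative bounds, pass to the limit) match the paper. The real content, however, is in the quasiderivative construction, and here your proposal has a genuine gap.

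You try to handle the whole domain with a single choice $r_s=(\rho(x_s),\xi_s)-\alpha\psi_{(\xi_s)}/\psi$, $P_s=Q(x_s,\xi_s)$, and then claim that $\psi_{(\xi_t)}^2/\psi(x_t)$ becomes a supermartingale whose limit at $\tau$ vanishes, forcing $\xi_\tau$ tangent. This does not work as stated. First, Assumption~\ref{ic} only guarantees that $\rho,Q,M$ are bounded on each $D_\lambda=\{\psi>\lambda\}$, not uniformly as one approaches $\partial D$; your $r_s$ and $P_s$ may blow up near the boundary for reasons unrelated to the $\psi_{(\xi)}/\psi$ term. Second, the identity that makes the martingale part of $d\psi_{(\xi_t)}$ proportional to $\psi_{(\xi_t)}/\psi$ (and hence makes a barrier like $\psi_{(\xi)}^2/\psi$ tractable) requires a very specific choice of $r$ and $P$ built out of $\psi_{(\sigma^k)}$ and $(\psi_{(\sigma^k)})_{(\xi)}$, not out of the $\rho,Q$ from Assumption~\ref{ic}; with your mixed choice the cancellation fails and the drift of $\psi_{(\xi_t)}^2/\psi$ picks up terms of order $|\xi_t|\,|\psi_{(\xi_t)}|/\psi$ that you cannot absorb. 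Third, even in the paper's boundary construction the quantity $\psi_{(\xi_t)}^2/\psi$ alone is \emph{not} a supermartingale; one must add terms in $|\xi_t|^2$ with carefully tuned $\psi$-dependent coefficients to obtain a supermartingale barrier $\mathrm{B}_1(x,\xi)$.

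The paper resolves this by running two \emph{different} quasiderivative constructions on two overlapping regions: near the boundary (Lemma~\ref{3l1}) the auxiliary processes are built from $\psi$ and $\psi_{(\sigma^k)}$ and one proves that the barrier $\mathrm{B}_1(x_t,\xi_t)$ is a supermartingale on $[0,\tau_1^\delta]$; in the interior (Lemma~\ref{3l2}) the auxiliary processes come from $\rho,Q,M$ of Assumption~\ref{ic} and the barrier is $e^{-\phi_t}|\xi_t|^2$. Neither construction forces $\xi_\tau$ tangent. Instead, one obtains inequalities of the form $|u_{(\xi_0)}(x_0)|/\sqrt{\mathrm{B}_i(x_0,\xi_0)}\le \sup_{\partial(\text{region})}|u_{(\xi)}|/\sqrt{\mathrm{B}_i}+N_1$ on each region, and then iterates between $\{\psi=\lambda\}$ and $\{\psi=\lambda^2\}$ using the comparison $\mathrm{B}_1\ge 4\mathrm{B}_2$ on one level set and $4\mathrm{B}_1\le \mathrm{B}_2$ on the other, producing a contraction. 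The boundary contribution is handled only in the final $\delta\downarrow 0$ limit: on $\{\psi=\delta\}$ either $\psi_{(\zeta)}(y)\ne 0$ and $\mathrm{B}_1\to\infty$, or $\zeta$ is tangential and one reads off $g_{(\zeta)}$. This two-region contraction is the missing idea in your sketch; without it the tangency claim and the moment bounds are not justified. The second-derivative argument follows the same two-region pattern, with Lemma~\ref{lemma4} supplying the normal-derivative bound you correctly identified as needed.
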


\begin{remark}
The author doesn't know whether the estimates (\ref{3d}) and (\ref{3dd}) are sharp.
\end{remark}

\begin{remark}
We give two examples to show that Assumption \ref{ic} is necessary under Assumption \ref{nd} and how to take advantage of the parameters $\rho, Q, M$ in (\ref{inequality}), respectively. They are similar to Remark V.8.6 and Example VI.1.7 in \cite{MR1311478}. See Example V.8.3, Remark V.8.6, Example VI.1.2 and Example VI.1.7 in \cite{MR1311478} for more details.

In the first example, we take $d=d_1=1$ and $D=(-2,2)$. Let $\sigma(x)=x, b(x)=\beta x$ in $[-2,2]$ and $c(x)=\nu, f(x)=0$ in $[-1,1]$, where $\nu>0, \beta\in\mathbb R$ are constants. Extend $c(x)$ and $f(x)$ outside $[-1,1]$ in such a way that $c(x)\ge\nu, f(x)>0$, and $c$ and $f$ are smooth on $[-2,2]$, bounded and have bounded derivatives up to second order. Let $g(x)=0$ on $\partial D=\{-2,2\}$. Define
$$\tau_1(x)=\inf\{t\ge0:|x_t(x)|\ge1\}, \qquad \tau_2(x)=\inf\{t\ge0:|x_t(x)|\ge2\}.$$

Based on our construction, for all $t\in[0,\tau_2(x)]$ (a.s.),
$$x_t(x)=xe^{w_t+(\beta-1/2)t}.$$
It follows that for any $x\in(0,1]$, $x_t(x)$ takes the value 1 at time $\tau_1(x)$ almost surely. Similarly, for any $x\in[-1,0)$, $x_{\tau_1(x)}(x)=-1$ (a.s.). Also, note that
$$Ee^{-\nu\tau_1(x)}=x^\kappa, \mbox{ with }\kappa=[(\beta-1/2)^2+2\nu]^{1/2}-\beta+1/2.$$
Hence
\begin{equation}
u(x)=\left\{\begin{array}{ll}
Ee^{-\nu\tau_1(x)}u(x_{\tau_1(x)}(x))=u(1)x^\kappa&\mbox{ if }x\in(0,1],\\
Ee^{-\nu\tau_1(x)}u(x_{\tau_1(x)}(x))=u(-1)|x|^\kappa&\mbox{ if }x\in[-1,0),\\
0&\mbox{ if }x=0.
\end{array}
\right.
\end{equation}
Notice that $u(1)>0, u(-1)>0$, so $u(x)$ has Lipschitz continuous derivatives if and only if $\kappa\ge2$. It is equivalent to $1+2\beta\le\nu$, which is exactly (\ref{inequality}) in which $\rho,Q,M$ are vanishing. This example shows that Assumption \ref{ic} is necessary.

Next, we discuss an advantage of the parameters $\rho, Q, M$ in (\ref{inequality}). More precisely, we show that with the help of these parameters, based on some local information, Assumption \ref{ic} holds. Assume that $d=d_1=1$ for the sake of simplicity, and for each $x\in D$ where $\sigma(x)=b(x)=0$, we have
\begin{equation}\label{bds}
|\sigma'(x)|^2+2b'(x)< c(x).
\end{equation}
With this local property, we claim that Assumption \ref{ic} hold. Indeed, we observe that for 
$$\rho(x)=-nb(x),\qquad Q(x,y)=nb(x)y,\qquad M(x)=n,$$
the inequality (\ref{inequality}) becomes
\begin{equation}\label{conv}
|\sigma'(x)|^2+2b'(x)\le c(x)+n\sigma^2(x)+4nb^2(x).
\end{equation}
Suppose that there exists $D_\lambda$, for any $n\in\{1,2,...\}$, there exists a point $x_n$ at which the inequality converse to (\ref{conv}) holds. Then we can exact from the sequence $(\sigma(x_n),\sigma'(x_n),$ $ b(x_n), b'(x_n), c(x_n))$ a subsequence that converges to $(\sigma(x_0),\sigma'(x_0),$ $ b(x_0), b'(x_0), c(x_0))$ for some $x_0\in\bar D_\lambda$. It follows from (\ref{3setting}) that
$$n\sigma^2(x_n)+4nb^2(x_n)<|\sigma'(x_n)|^2+2b'(x_n)\le K_0, \forall n.$$
Therefore, $\sigma(x_0)=b(x_0)=0$ and
$$|\sigma'(x_0)|^2+2b'(x_0)\ge c(x_0)$$
It is a contradiction to (\ref{bds}), so for any $\lambda$, there exists $n_\lambda$, such that the inequality  (\ref{conv}) holds in $D_\lambda$ for  $n_\lambda$. As a consequence, Assumption \ref{ic} is indeed satisfied.  

\end{remark}

The following two remarks are reductions of Theorem \ref{3c}.

\begin{remark}\label{reduction1}
Without loss of generality, we may assume that $c\ge1$ and replace condition (\ref{inequality}) by
\begin{equation}\label{ineq}
\begin{gathered}
\big\|\sigma_{(y)}(x)+(\rho(x),y)\sigma(x)+\sigma(x)Q(x,y)\big\|^2+\\
\ 2\big(y,b_{(y)}(x)+2(\rho(x),y)b(x)\big)\le c(x)-1+M(x)\big(a(x)y,y\big).
\end{gathered}
\end{equation}

Indeed, letting $\displaystyle\tilde{u}=\frac{u}{\psi+1}$ in $D$, we have
$$u_{x^i}=(\psi+1)\tilde{u}_{x^i}+\psi_{x^i}\tilde{u},\ \  u_{x^ix^j}=(\psi+1)\tilde{u}_{x^ix^j}+\psi_{x^j}\tilde{u}_{x^i}+\psi_{x^i}\tilde{u}_{x^j}+\psi_{x^ix^j}\tilde{u}$$
Hence (\ref{1a}) turns into
$$
\left\{\begin{array}{rcll}
\displaystyle\tilde a^{ij}(x)\tilde{u}_{x^ix^j}+\tilde b^i(x)\tilde{u}_{x^i}-\tilde{c}(x)\tilde{u}+f(x)&=&0,&\mbox{ in }D\\
\displaystyle\tilde{u}&=&\tilde g:=g/(1+\psi), &\mbox{ on }\partial D
\end{array}
\right.
$$
with
$$
\tilde{a}^{ij}=(\psi+1)a^{ij},\ \ \tilde{b}^i=2a^{ij}\psi_{x^j}+(\psi+1)b^i,\ \ \tilde{c}=-L\psi+(1+\psi)c.
$$
Notice that $\tilde{\sigma}^{ij}=\sqrt{\psi+1}\sigma^{ij}$. So a direct computation implies that
$$
|\tilde\sigma^{ij}|_{2,D}+|\tilde b^i|_{2,D}+|\tilde c|_{2,D}+|\psi|_{4,D}\le (d^2+2d+2)K_0^3,
$$
which plays the same role as (\ref{3setting}). 

Since $L\psi\le-1$ and $c\ge0$, $\tilde c\ge1$.

We also have $(\tilde{a}n,n)>0$ on $\partial D$. Under the substitutions on $\sigma$, $b$ and $c$, by inequality (\ref{inequality}), we have
$$
\begin{gathered}
\frac{1}{\psi+1}\Big\|\tilde\sigma_{(y)}(x)-\frac{1}{2}\frac{\psi_{(y)}}{\psi+1}\tilde\sigma(x)+\big(\rho(x),y\big)\tilde\sigma(x)+\tilde\sigma(x)Q(x,y)\Big\|^2\\
+\frac{2}{\psi+1}\Big(y,\tilde{b}_{(y)}(x)-\frac{\psi_{(y)}}{\psi+1}\tilde{b}(x)+2\big(\rho(x),y\big)\tilde{b}(x)\Big)\\
\le \frac{\tilde{c}(x)+L\psi}{\psi+1}+M(x)\big(a(x)y,y\big)+2\bigg(y,\Big(\frac{2a\psi_x}{\psi+1}\Big)_{(y)}+2\big(\rho(x),y\big)\frac{2a\psi_x}{\psi+1}\bigg).
\end{gathered}
$$
Collecting similar terms and noticing that $L\psi\le-1$, we get
$$
\begin{gathered}
\Big\|\tilde\sigma_{(y)}(x)+\big(\tilde \rho(x),y\big)\tilde\sigma(x)+\tilde\sigma(x)Q(x,y)\Big\|^2+2\Big(y,\tilde b_{(y)}(x)+2\big(\tilde \rho(x),y\big)\tilde b(x)\Big)\\
\le \tilde c(x)-1+\tilde M(x)\big(\tilde a(x)y,y\big)+4\big(\tilde a_{(y)}(x)\psi_x,y\big),
\end{gathered}
$$
with
$$
\tilde \rho(x):=\rho(x)-\frac{\psi_x}{2(\psi+1)},
$$
and $\tilde M(x)$ is in terms of  $M(x),K_0$ and $|\rho(x)|$.

The term $4(\tilde a_{(y)}\psi_x,y)$ can not be bounded by $\tilde M(x)(\tilde a(x)y,y)$. However, notice that 
$$\tilde a_{(y)}(x)=\tilde\sigma(x)\tilde\sigma^*_{(y)}(x).$$
So $\tilde M(x)(\tilde a(x)y,y)+4(\tilde a_{(y)}\psi_x,y)$ can be rewritten in the form of 
$$\bigg(\tilde \sigma(x)\Big(\frac{\tilde M(x)}{2}\tilde\sigma^*(x)y+4\tilde\sigma^*_{(y)}(x)\psi_x\Big),y\bigg),$$ 
which can play the same role as that of $M(x)(a(x)y,y)$, which, in the proof, will be rewritten in the form of
$$ \Big(\sigma(x)\cdot\frac{M(x)}{2}\sigma^*(x)y,y\Big).$$

A direct computation shows that if $\tilde u$ satisfies estimates (\ref{3d}) and (\ref{3dd}), we have the same estimates for $u$.

\end{remark}


\begin{remark}\label{reduction2}
Without loss of generality, we may assume that $u\in C^1({D})$ and $f,g\in C^1(\bar D)$ when investigating first derivatives of $u$, and $u\in C^2({D})$ and $f,g\in C^2(\bar D)$ when investigating second derivatives of $u$. 

Let us take the first situation for example, in which $u, f, g$ can be assumed to be of class $C^1$. The second situation can be discussed by almost the same argument. 

We define the process $x_t^\ve(x)$ to be the solution to the equation
$$x_t=x_0+\int_0^t\sigma(x_s)dw_s+\int_0^t\ve I d\tilde w_s+\int_0^tb(x_s)ds$$
where $\tilde w_t$ is a $d$-dimensional Wiener process independent of $w_t$ and $I$ is the identity matrix of size $d\times d$, and we define $\tau^\ve(x)$ to be the first exit time of $x_t^\ve(x)$ from $D$, then for the function 
$$u^\ve(x):=E\bigg[g\big(x^\ve_{\tau^\ve(x)}(x)\big)e^{-\phi^\ve_{\tau^\ve(x)}}+\int_0^{\tau^\ve(x)}f\big(x^\ve_t(x)\big)e^{-\phi^\ve_t}dt\bigg],$$
$$\mbox{with }\phi^\ve_t:=\int_0^tc(x_t^\ve(x))dt,$$
the relation $u^\ve\rightarrow u$ holds as $\ve\rightarrow0$. Indeed, notice that
\allowdisplaybreaks\begin{align*}
E|g(x^\ve_{\tau^\ve}(x))-g(x_\tau(x))|
\le& KE\Big(|x^\ve_{\tau^\ve\wedge\tau}(x)-x_{\tau^\ve\wedge\tau}(x)|\\
&+(\tau^\ve\vee\tau-\tau^\ve\wedge\tau)+(\tau^\ve\vee\tau-\tau^\ve\wedge\tau)^{1/2}\Big),\\
E|e^{-\phi^\ve_{\tau^\ve}}-e^{-\phi_\tau}|\le& Ee^{-\tau^\ve\wedge\tau}|\phi^\ve_{\tau^\ve}-\phi_\tau|\\
\le& KEe^{-\tau^\ve\wedge\tau}\Big(\tau^\ve\wedge\tau\cdot\sup_{t\le\tau^\ve\wedge\tau}|x_t^\ve(x)-x_t(x)|\\
&+(\tau^\ve\vee\tau-\tau^\ve\wedge\tau)+(\tau^\ve\vee\tau-\tau^\ve\wedge\tau)^{1/2}\Big)\\
\le&KE\Big(\sup_{t\le\tau^\ve\wedge\tau}|x_t^\ve(x)-x_t(x)|\\
&+(\tau^\ve\vee\tau-\tau^\ve\wedge\tau)+(\tau^\ve\vee\tau-\tau^\ve\wedge\tau)^{1/2}\Big),
\end{align*}
and
\begin{align*}
&E\Big|\int_0^{\tau^\ve}f(x^\ve_t(x))e^{-\phi_t^\ve}dt-\int_0^\tau f(x_t(x))e^{-\phi_t}dt\Big|\\
\le&E\int_0^{\tau^\ve\wedge\tau}|f(x^\ve_t(x))e^{-\phi_t^\ve}-f(x_t(x))e^{-\phi_t}|dt+KE(\tau^\ve\vee\tau-\tau^\ve\wedge\tau)\\
\le&E\int_0^{\tau^\ve\wedge\tau}K\Big(|x_t^\ve(x)-x_t(x)|+t\cdot\sup_{s\le t}|x_s^\ve(x)-x_s(x)|\Big)e^{-t}dt\\
&+KE(\tau^\ve\vee\tau-\tau^\ve\wedge\tau)\\
\le&KE\Big(\sup_{t\le\tau^\ve\wedge\tau}|x_t^\ve(x)-x_t(x)|+(\tau^\ve\vee\tau-\tau^\ve\wedge\tau)\Big),
\end{align*}
where $K$ is a constant depending on $|g|_{0,1,D}, |f|_{0,1,D}$ and $K_0$. It follows that
\begin{align*}
|u^\ve(x)-u(x)|\le &KE\Big(\sup_{t\le\tau^\ve\wedge\tau}|x_t^\ve(x)-x_t(x)|\\
&+(\tau^\ve\vee\tau-\tau^\ve\wedge\tau)+(\tau^\ve\vee\tau-\tau^\ve\wedge\tau)^{1/2}\Big)\\
\le &K\Big(E\sup_{t\le\tau^\ve\wedge\tau\wedge T}|x_t^\ve(x)-x_t(x)|+KP(\tau>T)\\
&+EI_1+EI_2+\sqrt{EI_1}+\sqrt{EI_2}\Big),
\end{align*}
where
$$I_1=(\tau^\ve\vee\tau-\tau^\ve\wedge\tau)I_{\tau>\tau^\ve}=(\tau-\tau^\ve)I_{\tau>\tau^\ve},$$
$$I_2=(\tau^\ve\vee\tau-\tau^\ve\wedge\tau)I_{\tau<\tau^\ve}=(\tau^\ve-\tau)I_{\tau<\tau^\ve}.$$

It remains to notice that
$$E\sup_{t\le \tau^\ve\wedge\tau\wedge T}|x_t(x)-x^\ve_t(x)|\le e^{KT}\ve\rightarrow0, \mbox{ as }\ve\rightarrow 0,$$
$$P(\tau>T)\le\frac{E\tau}{T}\le \frac{1}{T}E\int_0^\tau\Big(-L\psi\big(x_t(x)\big)\Big)dt=\frac{\psi(x)-\psi(x_\tau(x))}{T}\le\frac{K_0}{T},$$
and
\allowdisplaybreaks\begin{align*}
E(\tau-\tau^\ve)I_{\tau>\tau^\ve}=&E\int_{\tau\wedge\tau^\ve}^{\tau}1dt\\
\le&-E\int_{\tau\wedge\tau^\ve}^{\tau}L\psi(x_t(x))dt\\
=&-E\Big(\psi\big(x_{\tau}(x)\big)-\psi\big(x_{\tau^\ve}(x)\big)\Big)I_{\tau^\ve<\tau}\\
=&E\psi\big(x_{\tau^\ve(x)}(x)\big)
I_{\tau^\ve<\tau}\\
=&E\Big(\psi\big(x_{\tau^\ve}(x)\big)-\psi\big(x^\ve_{\tau^\ve}(x)\big)\Big)
I_{\tau^\ve<\tau}\\
\le&E\Big(\psi\big(x_{\tau^\ve}(x)\big)-\psi\big(x^\ve_{\tau^\ve}(x)\big)\Big)
I_{\tau^\ve<\tau\le T}+2K_0P(\tau>T)\\
\le&K_0E\sup_{t\le \tau^\ve\wedge\tau\wedge T}|x_t(x)-x^\ve_t(x)|+\frac{2K_0^2}{T}\\
E(\tau^\ve-\tau)I_{\tau<\tau^\ve}\le&-2E\int_{\tau\wedge\tau^\ve}^{\tau^\ve}L^\ve\psi(x^\ve_t(x))dt\\
\le&\cdots \le K_0E\sup_{t\le \tau^\ve\wedge\tau\wedge T}|x_t(x)-x^\ve_t(x)|+\frac{2K_0^2}{T}.
\end{align*}
Hence by first letting $\ve\downarrow 0$ and then $T\uparrow\infty$, we conclude that
$$|u^\ve(x)-u(x)|\rightarrow0\mbox{ as }\ve\rightarrow 0.$$

Moreover, for small $\ve$ the condition (\ref{psi}) holds for $2\psi$, taken instead of $\psi$ and $L^\ve$ associated to the process $x_t^\ve(x)$. The matrix $\sigma^\ve$ corresponding to the process $x_t^\ve(x)$ is obtained by attaching the identity matrix, multiplied by $\ve$, to the right of the original matrix $\sigma$. In this connection we modify $P(x,y)$ by adding zero entries on the right and below to form a $(d_1+d)\times(d_1+d)$ matrix. Then the condition (\ref{ineq}) corresponding to the process $x_t^\ve(x)$ will differ from the original condition by the fact that the term $\ve^2(\rho(x),y)^2d$ appears on the left, and $\frac{1}{2}M(x)\ve^2$ on the right. From this it is clear that the condition (\ref{ineq}) for the process $x_t^\ve(x)$ (for all $\ve$) also holds when $M(x)$ is replaced by $M(x)+2|\rho(x)|^2d$.

Finally, from analysis of PDE, we know that for $\ve\ne0$ the nondegenerate elliptic equation $L^\ve w=0$ in $D$ with the boundary condition $w=g$ on $\partial D$ has a solution that is continuous in $\bar D$ and twice continuously differentiable in $D$, and $u^\ve=w$ in $D$ by It\^o's formula. From this it follows that it suffices to prove the theorem for small $\ve\ne0$, the process $x^\ve_t(x)$, and a function $u^\ve$ that is continuously differentiable in $D$. Of course, we must be sure that the constants $N$ in (\ref{3d}) is chosen to be independent of $\ve$, which is true as we can see in the proof of the theorem. Observing further that for each fixed $\ve\ne0$ the functions $f$ and $g$ can be uniformly approximated in $\bar D$ by infinitely differentiable functions, in such a way that the last factor in (\ref{3d}) increases by at most a factor of two when $f$ and $g$ are replaced by the approximating functions, while for the latter the function $w$ (i.e., $u^\ve$) has continuous and bounded first derivatives in $\bar D$, we conclude that we may assume $u$ has continuous first derivatives in $D$ and $f,g\in C^1(\bar D)$ when investigating first derivatives of $u$.

\end{remark}

Before proving the theorem, let us prove four lemmas. In Lemma \ref{lemma1} we estimate the first exit time. It is a well-known result, but we still prove for the sake of completeness. Lemma \ref{lemma4} concerns the estimate of the first derivative along the normal to the boundary, to be used when estimating the second derivatives. In Lemma \ref{3l1} and Lemma \ref{3l2}, we construct two supermartingales, which will play the roles of barriers near the boundary and in the interior of the domain, respectively.

\begin{lemma} \label{lemma1}
Let $\tau_{D_0}(x)$ be the first exit time of $x_t(x)$ from $D_0$, which is a sub-domain of $D$ containing $x$. Then we have
$$E\tau_{D_0}(x)\le E\tau_{D}(x)\le\psi(x)\le |\psi|_{0,D},$$
$$E\tau^2_{D_0}(x)\le E\tau^2_{D}(x)\le2|\psi|_{0,D}\psi(x)\le 2|\psi|_{0,D}^2.$$
\end{lemma}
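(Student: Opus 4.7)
The plan is to exploit the single property $L\psi\le-1$ together with $\psi\ge0$ in $D$ and $\psi=0$ on $\partial D$, using It\^o's formula for the first-moment bound and then a Fubini/strong-Markov argument for the second-moment bound.

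First, the inequality $E\tau_{D_0}(x)\le E\tau_D(x)$ is immediate pathwise: since $D_0\subset D$, the process must leave $D_0$ no later than $D$, so $\tau_{D_0}\le\tau_D$ almost surely. For the central bound $E\tau_D(x)\le\psi(x)$, I would apply It\^o's formula to $\psi(x_{t\wedge\tau_D}(x))$. Because $\sigma$ is bounded on $\bar D$ and $\psi\in C^4$, the martingale part is a true martingale when stopped at $\tau_D\wedge t$, so after taking expectations,
\[
E\psi(x_{t\wedge\tau_D}(x))=\psi(x)+E\int_0^{t\wedge\tau_D}L\psi(x_s(x))\,ds\le\psi(x)-E(t\wedge\tau_D).
\]
Rearranging and using $\psi\ge0$ in $\bar D$ yields $E(t\wedge\tau_D)\le\psi(x)$. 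Monotone convergence as $t\uparrow\infty$ gives $E\tau_D\le\psi(x)\le|\psi|_{0,D}$.

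For the second-moment bound, I would use the elementary identity $T^2=2\int_0^T(T-s)\,ds$ applied to $T=\tau_D$, together with Tonelli:
\[
E\tau_D^2=2\int_0^\infty E\bigl[\mathbf{1}_{\{s<\tau_D\}}(\tau_D-s)\bigr]\,ds.
\]
On $\{s<\tau_D\}$, the strong Markov property identifies $E[\tau_D-s\mid\mathcal F_s]$ with $E\tau_D(y)$ evaluated at $y=x_s(x)$, which by the first part is bounded by $\psi(x_s(x))\le|\psi|_{0,D}$. Conditioning on $\mathcal F_s$ inside the integrand and then integrating yields
\[
E\tau_D^2\le 2|\psi|_{0,D}\int_0^\infty P(s<\tau_D)\,ds=2|\psi|_{0,D}\,E\tau_D\le 2|\psi|_{0,D}\psi(x)\le 2|\psi|_{0,D}^2.
\]

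There is no real obstacle: the only care needed is the standard localization to legitimately take expectation of the It\^o differential (done via the stopping times $\tau_D\wedge t$, and $\sigma,b,\psi_x,\psi_{xx}$ being bounded on $\bar D$ by \eqref{3setting}) and a careful invocation of the strong Markov property after restricting to $\{s<\tau_D\}$. Both are routine.
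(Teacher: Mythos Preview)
Your proof is correct and follows essentially the same approach as the paper: It\^o's formula with $L\psi\le-1$ for the first-moment bound, then the identity $\tau^2=2\int_0^\tau(\tau-t)\,dt$ combined with the Markov property for the second moment. Your version is slightly more careful about localization (working with $t\wedge\tau_D$ and passing to the limit), but the argument is otherwise identical.
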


\begin{proof}
The fact that $D_0\subset D$ implies $E\tau_{D_0}(x)\le E\tau_{D}(x)$ and $E\tau^2_{D_0}(x)\le E\tau^2_{D}(x)$. Now we abbreviate $\tau_{D}(x)$ by $\tau(x)$, or simply $\tau$ when this will cause no confusion. By (\ref{psi}) and It\^o's formula, we have
\begin{align*}
E\tau&=E\int_0^\tau1dt\le-E\int_0^\tau L\psi dt=\psi(x)-E\psi(x_\tau)=\psi(x),\\
E\tau^2&=2E\int_0^\infty(\tau-t)I_{\tau>t}dt=2E\int_0^\infty I_{\tau>t}E\tau(x_t)dt\\
&\le 2\sup_{y\in D}E\tau(y)\cdot E\int_0^\infty I_{\tau>t}dt=2\sup_{y\in D}E\tau(y)\cdot E\tau\le2|\psi|_{0,D}\psi(x).
\end{align*}
\end{proof}

\begin{lemma}\label{lemma4}
If $f, g\in C^2(\bar D)$, and $u\in C^1(\bar D)$, then for any $y\in\partial D$ we have
\begin{equation}
|u_{(n)}(y)|\le K(|g|_{2,D}+|f|_{0,D})\label{normal},
\end{equation}
where $n$ is the unit inward normal on $\partial D$ and the constant $K$ depends only on $K_0$.
\end{lemma}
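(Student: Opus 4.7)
My approach is a barrier argument built around $\psi$. Define $v_\pm(x) := g(x) \pm A\psi(x)$ with constant $A \ge 0$ to be chosen. Both $v_\pm$ belong to $C^2(\bar D)$ and agree with $g$ (hence with $u$) on $\partial D$ since $\psi|_{\partial D} = 0$. A direct computation yields
\[
L v_\pm - c v_\pm + f = (Lg - cg + f) \pm A(L\psi - c\psi).
\]
Using (\ref{psi}) together with $c \ge 0$ and $\psi \ge 0$ gives $L\psi - c\psi \le -1$, while (\ref{3setting}) provides $|Lg - cg + f|_{0,D} \le K(|g|_{2,D} + |f|_{0,D})$ with $K = K(K_0)$. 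Taking $A := K(|g|_{2,D} + |f|_{0,D})$ therefore forces $L v_+ - c v_+ + f \le 0$ and $L v_- - c v_- + f \ge 0$ throughout $D$.

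Next, I would apply It\^o's formula to $e^{-\phi_t} v_\pm(x_t)$. Since $\nabla v_\pm$ and $\sigma$ are bounded on $\bar D$ and $\phi_t \ge 0$, the stochastic integral part is a true martingale on $[0,\tau]$; combining this with the control of $E\tau$ from Lemma \ref{lemma1} and dominated convergence, I take expectations at time $\tau$ and use $v_\pm|_{\partial D}=g$ to obtain
\[
v_\pm(x) = E\big[e^{-\phi_\tau} g(x_\tau)\big] + E\int_0^\tau e^{-\phi_s}\big[-L v_\pm(x_s) + c(x_s) v_\pm(x_s)\big]\,ds.
\]
The sign inequalities above and the probabilistic representation (\ref{1b}) of $u$ then give the sandwich $v_-(x) \le u(x) \le v_+(x)$ for every $x \in D$.

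Finally, fix $y \in \partial D$. Since $v_\pm(y) = u(y) = g(y)$, the sandwich reads
\[
(g(x) - g(y)) - A\psi(x) \le u(x) - u(y) \le (g(x) - g(y)) + A\psi(x), \qquad x \in D.
\]
Setting $x = y + h\,n(y)$ with $h > 0$ small, dividing by $h$, and letting $h \to 0^+$ gives $|u_{(n)}(y) - g_{(n)}(y)| \le A\,\psi_{(n)}(y) \le A K_0$, hence $|u_{(n)}(y)| \le |g|_{1,D} + AK_0 \le K(|g|_{2,D} + |f|_{0,D})$, which is (\ref{normal}). The approach poses no serious difficulty; the only technical point is verifying the true-martingale property when passing to expectations, which is routine given the boundedness of $\nabla v_\pm$ and $\sigma$ on $\bar D$ together with Lemma \ref{lemma1}. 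Neither Assumption \ref{nd} nor Assumption \ref{ic} enters this a priori bound; only the existence of $\psi$ with $L\psi \le -1$ is used.
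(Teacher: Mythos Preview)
Your argument is correct and is essentially the same barrier argument the paper gives. The paper applies It\^o's formula to $e^{-\phi_t}g(x_t)$ alone, obtains
\[
u(x)=g(x)+E\int_0^\tau e^{-\phi_t}\bigl(Lg-cg+f\bigr)(x_t)\,dt,
\]
and then bounds the integral by $K(|g|_{2,D}+|f|_{0,D})E\tau\le K(|g|_{2,D}+|f|_{0,D})\psi(x)$ via Lemma~\ref{lemma1}; this is exactly your inequality $u\le v_+$ with $A=K(|g|_{2,D}+|f|_{0,D})$. Your phrasing packages the same computation as an explicit super/subsolution comparison $v_-\le u\le v_+$, which is a cosmetic difference only.
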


\begin{proof}
Fix a $y\in\partial D$, and choose $\ve_0>0$ so that $y+\ve n\in D$ as long as $0<\ve\le\ve_0$. Also, fix an $\ve\in(0,\ve_0]$ and let $x:=y+\ve n$.
By It\^o's formula, 
$$d(g(x_t)e^{-\phi_t})=e^{-\phi_t}g_{(\sigma^k)}(x_t)dw_t^k+e^{-\phi_t}(Lg(x_t)-c(x_t)g(x_t))dt.$$
Notice that
$$E\int_0^\infty\Big(e^{-\phi_t}g_{(\sigma^k)}(x_t)\Big)^2I_{t\le\tau}dt\le N|g|_{1,D}^2E\tau<\infty.$$
The Wald identities hold:
$$E\int_0^\tau e^{-\phi_t}g_{(\sigma^k)}(x_t)dw_t^k=0.$$
Thus
$$Ee^{-\phi_\tau}g\big(x_\tau(x)\big)=g(x)+E\int_0^\tau e^{-\phi_t}\big(Lg(x_t)-c(x_t)g(x_t)\big)dt.$$
Together with (\ref{1b}), we have
\begin{align*}
u(x)=&g(x)+E\int_0^\tau e^{-\phi_t}(Lg(x_t(x))-c(x_t(x))g(x_t(x)))dt\\
&+E\int_0^{\tau}f\big(x_t(x)\big)e^{-\phi_t}dt\\
\le&g(x)+(|Lg|_{0,D}+|c|_{0,D}|g|_{0,D}+|f|_{0,D})E\tau\\
\le&g(x)+K(|g|_{2,D}+|f|_{0,D})\psi(x).
\end{align*}
Notice that $u(y)=g(y)$ and $\psi(y)=0$. So we have
$$\frac{u(y+\ve n)-u(y)}{\ve}\le\frac{g(y+\ve n)-g(y)}{\ve}+K(|g|_{2,D}+|f|_{0,D})\frac{\psi(y+\ve n)-\psi(y)}{\ve}.$$
Letting $\ve\downarrow0$, we get
$$u_{(n)}(y)\le K(|g|_{2,D}+|f|_{0,D}).$$
Replacing $u$ with $-u$ yields the same estimate of $(-u)_{(n)}$ from above, which is an estimate of $u_{(n)}$ from below. Combining the estimates from above and from below leads to ($\ref{normal}$) and proves the lemma.
\end{proof}

For constants $\delta$ and $\lambda$, such that $0<\delta<\lambda^2<\lambda<1$, define
\begin{align*}
D^\lambda&=\{x\in D: \psi<\lambda\},\\
D_\delta^\lambda&=\{x\in D: \delta<\psi<\lambda\}.
\end{align*}

Considering that the formulas of the quasiderivatives $\xi_t,\eta_t$ and the barrier functions $\mathrm B_1(x,\xi), \mathrm B_2(x,\xi)$ constructed in Lemmas \ref{3l1} and \ref{3l2} are complicated, and the proofs of Lemmas \ref{3l1} and \ref{3l2} are long and technical, we first make a remark on the motivation of these constructions.

\begin{remark}
As discussed right after Definition \ref{2bb} in section 2, when investigating the first derivative of $u$, the main difficulty comes from the term $Eu_{(\xi_\tau)}(x_\tau)$ in (\ref{uxi}), and we should try to construct $\xi_t$ in such a way that $\xi_{\tau}$ is tangent to $\partial D$ at $x_{\tau}(x)$ almost surely. Considering that the diffusion process $x_t$ and domain $D$ are quite general in our setting, it is almost impossible, since there is no way to know when or where $x_t$ exits the domain. Therefore, what we actually try is constructing $\xi_t$ in such a way that either $\xi_{\tau}$ is tangent to $\partial D$ at $x_{\tau}(x)$ almost surely, or $|u_{(\xi_\tau)}(x_\tau)|$ is bounded by a nonnegative local supermartingale $\mathrm B(x_\tau,\xi_\tau)$. If we succeed, we will have
\begin{equation*}
E|u_{(\xi_\tau)}(x_\tau)|
\left\{\
\begin{array}{ll}
=E|g_{(\xi_\tau)}(x_\tau)|\le |g|_{1,\partial D}E|\xi_\tau|&\mbox{if $\xi_{\tau(x)}$ is tangent to $\partial D$}\\
\le E\mathrm B(x_\tau,\xi_\tau)\le \mathrm B(x,\xi) &\mbox{if $\xi_{\tau(x)}$ is not tangent to $\partial D$}.
\end{array}
\right.
\end{equation*}
As we will see in the following two lemmas, $\mathrm B(x,\xi)=\sqrt{\mathrm B_1(x,\xi)}$ near the boundary, while $B(x,\xi)=\sqrt{\mathrm B_2(x,\xi)}$ in the interior of the domain.

\end{remark}
\begin{lemma} \label{3l1}
Introduce
$$
\varphi(x)=\lambda^2+\psi(1-\frac{1}{4\lambda}\psi), \ \ \mathrm{B}_1(x,\xi)=\big[\lambda+\sqrt{\psi}(1+\sqrt{\psi})\big]|\xi|^2+K_1\varphi^\frac{3}{2}\pxsop,
$$
where $K_1\in[1,\infty)$ is a constant depending only on $K_0$.

In $D^\lambda$, if we construct first and second quasiderivatives by (\ref{2d}) and (\ref{2e}), in which
\allowdisplaybreaks\begin{align*}
&r(x,\xi):=\rho(x,\xi)+\frac{\psi_{(\xi)}}{\psi},\ \ r_t:=r(x_t,\xi_t),\\
&\mbox{ where }\rho(x,\xi):=-\frac{1}{A}\sum_{k=1}^{d_1}\psk(\psk)_{(\xi)},\mbox{ with }A:=\sum_{k=1}^{d_1}\psk^2;\\
&\hat{r}(x,\xi):=\pxsops,\ \ \hat{r}_t:=\hat{r}(x_t,\xi_t);\\
&\pi^k(x,\xi):=\frac{2\psk\psi_{(\xi)}}{\varphi\psi},\ \  k=1,...,d_1,\ \ \pi_t:=\pi(x_t,\xi_t);\\
&P^{ik}(x,\xi):=\frac{1}{A}\big[\psk(\mypsi)_{(\xi)}-\mypsi(\psk)_{(\xi)}\big],\ \  i,k=1,...,d_1,\ \ P_t:=P(x_t,\xi_t); \\
&\hat{\pi}_t^k=\hat{P}_t^{ik}=0,\ \ \forall i,k=1,...d_1,\forall t\in[0,\infty).
\end{align*}
Then for sufficiently small $\lambda$, when $x_0\in D_\delta^\lambda$, $\xi_0\in\Rd$ and $\eta_0=0$, we have
\begin{enumerate}
\item $\mathrm{B}_1(x_t,\xi_t)$ and $\sqrt{\mathrm{B}_1(x_t,\xi_t)}$ are local supermartingales on $[0,\tau_1^\delta]$, where $\tau_1^\delta=\tau_{D_\delta^\lambda}(x_0)$;
\item $\displaystyle{E\int_0^{\tau_1^\delta}|\xi_t|^2+\pxtsops dt\le N\mathrm{B}_1(x_0,\xi_0)}$;
\item $\displaystyle{E\sup_{t\le\tau_1^\delta}|\xi_t|^2\le N\mathrm{B}_1(x_0,\xi_0)}$;
\item $\displaystyle{E|\eta_{\tau_1^\delta}|\le E\sup_{t\le\tau_1^\delta}|\eta_t|\le N\mathrm{B}_1(x_0,\xi_0)}$;
\item $\displaystyle{E\Big(\int_0^{\tau_1^\delta}|\eta_t|^2 dt\Big)^{\frac{1}{2}}\le N\mathrm{B}_1(x_0,\xi_0)}$;
\end{enumerate}
where $N$ is a constant depending on $K_0$ and $\lambda$.
\end{lemma}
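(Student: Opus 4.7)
The plan is to prove the five claims in the stated order, with the main work concentrated in claim (1); (2)--(5) follow by standard stochastic analysis once (1) is in hand. For (1), I would apply It\^o's formula to $\mathrm{B}_1(x_t,\xi_t)$ using the joint dynamics of $(x_t,\xi_t)$ from \eqref{2d}. The drift decomposes into a part quadratic in $\xi_t$ coming from the coefficient $[\lambda+\sqrt\psi(1+\sqrt\psi)]|\xi|^2$ and a part involving singular multiples of $\psi_{(\xi)}^2/\psi^k$ coming from $K_1\varphi^{3/2}\psi_{(\xi)}^2/\psi$. The six auxiliary parameters are calibrated exactly so that these combine into something non-positive: $r=\rho(x,\xi)+\psi_{(\xi)}/\psi$ is engineered so that the $(a\xi,\xi)$-type terms from $d|\xi_t|^2$ reproduce the left-hand side of \eqref{ineq} with this $\rho$ and $Q=P$; the skew-symmetric $P$ cancels the components of $\sigma_{(\xi)}$ transverse to $\psi_x$, leaving an aligned remainder that can be absorbed into $\rho$; the Girsanov drift induced by $\pi^k=2\psk\psi_{(\xi)}/(\varphi\psi)$, together with $\hat r=\psi_{(\xi)}^2/\psi^2$, cancels the most singular $1/\psi^2$ and $1/\psi^3$ contributions arising when differentiating $\psi_{(\xi)}^2/\psi$ along $\xi_t$. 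After these cancellations (requiring $K_1$ large and $\lambda$ small depending only on $K_0$), what remains is dominated by the strictly negative contributions from \eqref{ineq} in the reduced form of Remark \ref{reduction1} (where $c\ge1$ gives an extra $-1$ on the right) and from $L\psi\le-1$, yielding a drift inequality of the form
\[
d\mathrm{B}_1(x_t,\xi_t)\;\le\;-c_0\Big(|\xi_t|^2+\frac{\psi_{(\xi_t)}^2}{\psi(x_t)^2}\Big)\,dt+dM_t
\]
with $c_0>0$ and $M$ a local martingale. This proves supermartingality of $\mathrm{B}_1$; the corresponding property of $\sqrt{\mathrm{B}_1}$ then follows at once from It\^o's formula plus the concavity correction $-\tfrac18\mathrm{B}_1^{-3/2}\,d\langle\mathrm{B}_1\rangle\le 0$.

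Claim (2) falls out by integrating the displayed inequality on $[0,t\wedge\tau_1^\delta]$, taking expectations and letting $t\uparrow\infty$, using also that $\varphi$ is bounded above and below on $\bar D_\delta^\lambda$. For claim (3), write the Doob--Meyer decomposition $\mathrm{B}_1(x_t,\xi_t)=\mathrm{B}_1(x_0,\xi_0)+M_t-A_t$ with $A$ nonnegative increasing, so that $\sup_t|\xi_t|^2\le\lambda^{-1}(\mathrm{B}_1(x_0,\xi_0)+\sup_t|M_t|)$; the Burkholder--Davis--Gundy inequality controls $E\sup_t|M_t|$ by $NE\langle M\rangle_{\tau_1^\delta}^{1/2}$, and the quadratic variation rate of $M$ is itself bounded in terms of $|\xi|^2+\psi_{(\xi)}^2/\psi^2$, so (2) closes the estimate.

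Claims (4) and (5) concern $\eta_t$. With $\eta_0=0$ the equation \eqref{2e} is inhomogeneous linear in $\eta$, with inhomogeneity quadratic in $\xi$; since $\hat\pi=\hat P=0$ and the coefficients of $\eta$ are bounded on $D_\delta^\lambda$ by constants depending only on $K_0$ and $\lambda$, the inhomogeneity is dominated pointwise by $N(K_0,\lambda)(|\xi_s|^2+\psi_{(\xi_s)}^2/\psi^2)$. A BDG estimate on the diffusion integral plus Gronwall applied to $E\sup_{s\le t}|\eta_s|$ reduces (4) to bounding $E\int_0^{\tau_1^\delta}(|\xi_s|^2+\psi_{(\xi_s)}^2/\psi^2)\,ds$, which is exactly (2); a parallel argument, invoking Minkowski's inequality in $L^2(dt)$ before BDG, handles (5).

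The main obstacle is the drift computation in step (1). One must expand $d(\psi_{(\xi_t)}^2/\psi(x_t))$ by It\^o on a function singular on $\partial D$, track all $1/\psi$, $1/\psi^2$ and $1/\psi^3$ contributions, and verify that the delicate choices of $\rho$, $Q$, $\pi$, $\hat r$ (with the normalisation by $A=\sum_k(\psk)^2$) produce exact cancellation of the singular terms, so that \eqref{ineq} can be invoked on the residual $(a\xi,\xi)$-type terms. The specific weight $\varphi^{3/2}$ and the need for $K_1$ sufficiently large are forced by this bookkeeping, and any less careful choice leaves a positive remainder that cannot be controlled by the strictly negative contribution from $L\psi\le-1$.
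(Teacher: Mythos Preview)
Your overall outline is reasonable, but the proposal misidentifies the mechanism in two places, and the argument for (4)--(5) as written has a genuine gap.

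\textbf{Source of negativity in (1).} You plan to invoke inequality \eqref{ineq} (Assumption~\ref{ic}) to produce the negative drift. The paper does \emph{not} use Assumption~\ref{ic} anywhere in Lemma~\ref{3l1}; that assumption enters only in Lemma~\ref{3l2} for the interior region. In $D^\lambda$ the negativity comes entirely from Assumption~\ref{nd}: near $\partial D$ one has $A=\sum_k(\psi_{(\sigma^k)})^2\ge 1$ (after a harmless rescaling of $\psi$). The two dominant negative contributions in the drift $\Gamma_1$ are the term $-A|\xi|^2/(8\psi^{3/2})$ arising from the It\^o correction of $\sqrt\psi$ in the first summand of $\mathrm{B}_1$, and the Girsanov term $-4K_1\varphi^{1/2}A\,\psi_{(\xi)}^2/\psi^2$ coming from $-\sigma\pi$ in the second. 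The reduction $c\ge 1$ and the $(a\xi,\xi)$ structure of \eqref{ineq} play no role here; the specific $\rho(x,\xi)$ in this lemma is not the $\rho(x)$ of Assumption~\ref{ic}. Relatedly, the purpose of $\rho$ and $P$ is not to reproduce the left side of \eqref{ineq} but to force the exact identity
\[
(\psi_{(\sigma^i)})_{(\xi)}+r\,\psi_{(\sigma^i)}+\psi_{(\sigma^k)}P^{ki}=\frac{\psi_{(\xi)}}{\psi}\,\psi_{(\sigma^i)},
\]
so that the diffusion part of $d\psi_{(\xi_t)}$ is $(\psi_{(\xi_t)}/\psi)\psi_{(\sigma^i)}\,dw^i_t$. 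This identity (not $\hat r$, which does not appear in $\xi_t$ at all) is what makes the $1/\psi^3$ contributions in $d(\psi_{(\xi)}^2/\psi)$ cancel automatically.

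\textbf{Gap in (4)--(5).} A BDG\,+\,Gronwall bound on $E\sup_{s\le t}|\eta_s|$ yields an estimate with a factor $e^{Kt}$, and $\tau_1^\delta$ is only a.s.\ finite, not bounded; so you cannot pass to $t\uparrow\infty$. The paper avoids this by applying It\^o to the weighted quantity $|\eta_t|^2e^{2\varphi}$: for small $\lambda$ the drift contains $2(1-\tfrac{1}{4\lambda})A|\eta|^2$, and since $A\ge 1$ this gives a damping term $-\lambda_0|\eta|^2$ that replaces Gronwall and allows the estimate to close uniformly in $t$. After that, the conclusion follows from a standard moment inequality (Theorem~III.6.8 in \cite{MR1311478}) together with (2) and (3).
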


\begin{proof}

Throughout the proof, keep in mind that the constant $K$ depend only on $K_0$, while the constants $N\in[1,\infty)$ and $\lambda_0\in(0,1)$ depend on $K_0$ and $\lambda$.

First, notice that, on $\partial D$, we have
$$
A=\sum_{k=1}^{d_1}\psk^2=2(a\psi_x,\psi_x)=2|\psi_x|(an,n)\ge 2\delta,
$$
where the constant $\delta>0$, because of the compactness of $\partial D$. Replacing $\psi$ by $\psi/2\delta$ if needed, we may, therefore, assume that $A\ge1$.

By It\^o's formula, for $t<\tau_1^\delta$, we have
$$d\pxt=[(\mypsi)_{(\xi_t)}+ r_t\mypsi+\psk  P_t^{ki}]dw_t^i+[(L\psi)_{(\xi_t)}+2 r_tL\psi-\mypsi\pi^i_t]dt.$$
A crucial fact about this equation is that owing to our choice of $r$ and $P$
\[(\mypsi)_{(\xi_t)}+ r_t\mypsi+\psk  P^{ki}_t=\frac{\pxt}{\psi}\mypsi.\]
Thus
\begin{equation}
d\pxt=\frac{\pxt}{\psi}\mypsi dw_t^i+[(L\psi)_{(\xi_t)}+2 r_tL\psi-\mypsi\pi_t^i]dt.\label{4l}
\end{equation}
Let 
\[  \bar\sigma:=\sigma_{(\xi)}+ r\sigma+\sigma P,\qquad\bar b:=b_{(\xi)}+2 r b.\]
We have
\begin{equation}\label{sb1}
\|\bar\sigma\|\le K(|\xi|+\frac{|\px|}{\psi}),
\end{equation}
\begin{equation}\label{sb2}
|\bar b|\le K(|\xi|+\frac{|\px|}{\psi}).
\end{equation}
By It\^o's formula,
\begin{equation}
d\mathrm{B}_1(x_t, \xi_t)=\Gamma_1(x_t, \xi_t)dt+\Lambda_1^k(x_t,\xi_t) dw_t^k
\end{equation}
with
\begin{align*}
\Gamma_1(x,\xi)=I_1+I_2+...+I_{13}
\end{align*}
where
\allowdisplaybreaks\begin{align*}
&I_1=\lambda[2(\xi,\bar b)+\|\bar\sigma\|^2]\le\lambda K(|\xi|^2+\pxsops)\le K\lambda^{\frac{5}{2}}\frac{|\xi|^2}{\psi^\frac{3}{2}}+K\varphi^{\frac{1}{2}}\pxsops,\\
&\mbox{here we apply (\ref{sb1}), (\ref{sb2}) and } \lambda\le\varphi^{\frac{1}{2}},\\
&I_2=-\lambda2(\xi,\sigma^k)\pi^k\le  \frac{K\lambda|\xi||\psi_{(\xi)}|}{\varphi\psi}\le\frac{\lambda^2|\xi|^2}{32\cdot2^{\frac{3}{2}}\varphi^{\frac{5}{2}}}+\frac{K\varphi^{\frac{1}{2}}\psi_{(\xi)}^2}{\psi^2}\\
&\qquad\le\frac{|\xi|^2}{32\cdot2^{\frac{3}{2}}\varphi^{\frac{3}{2}}}+\frac{K\varphi^{\frac{1}{2}}\psi_{(\xi)}^2}{\psi^2}\le\frac{|\xi|^2}{32\psi^\frac{3}{2}}+K\varphi^\frac{1}{2}\pxsops,\\
&\mbox{here we apply }\lambda^2\le\varphi, \mbox{ and then observe that }\psi\le2\varphi,\\
&I_3=\sqrt{\psi}(1+\sqrt{\psi})2(\xi,\bar b)\le\sqrt{\psi}K|\xi|(|\xi|+\frac{|\psi_{(\xi)}|}{\psi})\le  K\lambda\frac{|\xi|^2}{\psi^\frac{3}{2}},\\
&\mbox{ here we apply (\ref{sb2})},\\
&I_4=-\sqrt{\psi}(1+\sqrt{\psi})2(\xi,\sigma^k)\pi^k\le \frac{K\sqrt{\psi}|\xi||\psi_{(\xi)}|}{\varphi\psi}\le\frac{\psi|\xi|^2}{32\cdot2^{\frac{5}{2}}\varphi^{\frac{5}{2}}}+\frac{K\varphi^{\frac{1}{2}}\psi_{(\xi)}^2}{\psi^2}\\
&\qquad\le\frac{|\xi|^2}{32\psi^\frac{3}{2}}+K\varphi^\frac{1}{2}\pxsops,\\
&\mbox{ here we observe that }\psi\le2\varphi,\\
&I_5=\sqrt{\psi}(1+\sqrt{\psi})\|\bar\sigma\|^2\le K\sqrt{\psi}(|\xi|^2+\pxsops)\le K\lambda^2\frac{|\xi|^2}{\psi^\frac{3}{2}}+K\varphi^\frac{1}{2}\pxsops,\\
&\mbox{ here we apply (\ref{sb1})},\\
&I_6=(1+2\sqrt{\psi})|\xi|^2[\frac{L\psi}{2\sqrt{\psi}}-\frac{A}{8\psi^\frac{3}{2}}]\le-\frac{|\xi|^2}{8\psi^\frac{3}{2}},\\
&I_7=\frac{A}{4\psi}|\xi|^2\le K\frac{|\xi|^2}{\psi}\le K\sqrt{\lambda}\frac{|\xi|^2}{\psi^\frac{3}{2}},\\
&I_8=(1+2\sqrt{\psi})\frac{\psk}{\sqrt{\psi}}(\xi,\bar\sigma^k)\le K\frac{|\xi|}{\sqrt{\psi}}(|\xi|+\frac{|\psi_{(\xi)}|}{\psi})\le K\lambda\frac{|\xi|^2}{\psi^\frac{3}{2}}+\frac{|\xi|^2}{32\psi^\frac{3}{2}}+K\varphi^\frac{1}{2}\frac{\psi_{(\xi)}^2}{\psi^2},\\
&\mbox{ here we apply (\ref{sb1}) and }\psi\le2\varphi,\\
&I_9=K_1\frac{3}{2}\varphi^\frac{1}{2}\big[(1-\frac{\psi}{2\lambda})L\psi-\frac{A}{4\lambda}\big]\pxsop+K_1\varphi^{\frac{3}{2}}3\pxsops L\psi\le 0,\\
&I_{10}=K_1\frac{3}{8}\varphi^{-\frac{1}{2}}(1-\frac{\psi}{2\lambda})^2A\pxsop\le K_1\frac{3}{8}\frac{\psi}{\varphi^\frac{1}{2}}A\pxsops\le K_1\frac{3}{4}\varphi^\frac{1}{2}A\pxsops,\\
&\mbox{ here we use }\psi\le2\varphi,\\
&I_{11}=K_1\varphi^{\frac{3}{2}}2\frac{\psi_{(\xi)}}{\psi}\big[(L\psi)_{(\xi)}+2\rho L\psi\big]\le K_1K\varphi^{\frac{3}{2}}\frac{|\psi_{(\xi)}|}{\psi}|\xi|\le K_1K\lambda\varphi^{\frac{1}{2}}\frac{|\psi_{(\xi)}|}{\psi}|\xi|\\
&\qquad\le K_1\lambda\varphi^\frac{1}{2}\pxsops+K_1K\lambda^3\frac{|\xi|^2}{\psi^\frac{3}{2}},\\
&\mbox{ here we first notice that }\varphi\le2\lambda,\mbox{ and then apply }\psi\le2\varphi,\\
&I_{12}=-K_1\varphi^\frac{3}{2}\frac{4}{\varphi}A\pxsops=-K_14\varphi^\frac{1}{2}A\pxsops,\\
&I_{13}=K_1\frac{3}{2}\varphi^\frac{1}{2}(1-\frac{\psi}{2\lambda})A\pxsops\le K_1\frac{3}{2}\varphi^\frac{1}{2}A\pxsops.
\end{align*}
Collecting our estimates above we see that, when $x\in D_\delta^\lambda$,
\begin{align*}
\Gamma_1(x,\xi)\le& \bigg[K(\lambda^{\frac{5}{2}}+\sqrt\lambda)+K_1K\lambda^3+\Big(\frac{3}{32}-\frac{1}{8}\Big)\bigg]\frac{|\xi|^2}{\psi^\frac{3}{2}}\\
&+\bigg[K+K_1\lambda+K_1A\Big(\frac{3}{4}+\frac{3}{2}-4\Big)\bigg]\varphi^\frac{1}{2}\pxsops.
\end{align*}
Recall that $K$ and $K_1$ depend only on $K_0$. By first choosing $K_1$ such that $K_1\ge K$, then letting $\lambda$ be sufficiently small, we get
\begin{equation}
\Gamma_1(x,\xi)\le -\frac{1}{64}\frac{|\xi|^2}{\psi^\frac{3}{2}}-\frac{1}{2}\varphi^\frac{1}{2}\pxsops\le-\frac{1}{64\lambda^\frac{3}{2}}|\xi|^2-\frac{\lambda}{2}\pxsops\le0\label{3e}.
\end{equation}
It follows that $\mathrm{B}_1(x_t,\xi_t)$ is a local supermartingale on $[0,\tau_1^\delta]$.

Also, notice that $f(x)=\sqrt{x}$ is concave, so $\sqrt{\mathrm{B}_1(x_t,\xi_t)}$ is a local supermartingale on $[0,\tau_1^\delta]$. Thus (1) is proved.

From (\ref{3e}), there exists a sufficiently small positive $\lambda_0$, such that
$$
\Gamma_1(x,\xi)+\lambda_0(|\xi|^2+\pxsops)\le 0,\forall x\in D_\delta^\lambda.
$$
Therefore, 
\begin{align*}
\lambda_0 E\int_0^{\tau_1^\delta}\bigg(|\xi_t|^2+\pxtsops \bigg)dt\le&-E\int_0^{\tau_1^\delta}\Gamma_1(x_t,\xi_t)\\
=&\mathrm{B}_1(x_0,\xi_0)-E\mathrm{B}_1(x_{\tau_1^\delta},\xi_{\tau_1^\delta})\le\mathrm{B}_1(x_0,\xi_0),
\end{align*}
which proves (2).

Since
$$
|\xi_t|^2=|\xi_0|^2+\int_0^t2(\xi_s,\bar b)+\|\bar\sigma\|^2ds+\int_0^t2(\xi_s,\bar\sigma)dw_s,
$$
by Burkholder-Davis-Gundy inequality, for $\tau_n=\tau_1^\delta\wedge\inf\{t\ge0:|\xi_t|\ge n\}$, we have,
\begin{align*}
E\sup_{t\le\tau_n}|\xi_t|^2\le& |\xi_0|^2+\int_0^{\tau_n}\Big(2|\xi_t|\cdot|\bar b|+\|\bar\sigma\|^2\Big)dt+6E\Big(\int_0^{\tau_n}|(\xi_t,\bar\sigma)|^2dt\Big)^{\frac{1}{2}}\\
\le&|\xi_0|^2+NE\int_0^{\tau_n}\bigg(|\xi_t|^2+\frac{\psi_{(\xi_t)}^2}{\psi^2}\bigg)dt+E\Big(\int_0^{\tau_n} N|\xi_t|^2(|\xi_t|^2+\frac{\psi_{(\xi_t)}^2}{\psi^2})dt\Big)^{\frac{1}{2}}\\
\le&N\mathrm{B}_1(x_0,\xi_0)+E\Big[\sup_{t\le\tau_n}|\xi_t|\cdot\Big(\int_0^{\tau_n} N(|\xi_t|^2+\frac{\psi_{(\xi_t)}^2}{\psi^2})dt\Big)^{\frac{1}{2}}\Big]\\
\le&N\mathrm{B}_1(x_0,\xi_0)+\frac{1}{2}E\sup_{t\le\tau_n}|\xi_t|^2+\frac{1}{2}E\Big(\int_0^{\tau_n} N(|\xi_t|^2+\frac{\psi_{(\xi_t)}^2}{\psi^2})dt\Big)\\
\le&N\mathrm{B}_1(x_0,\xi_0)+\frac{1}{2}E\sup_{t\le\tau_n}|\xi_t|^2,
\end{align*}
which implies that
$$
E\sup_{t\le\tau_n}|\xi_t|^2\le N\mathrm{B}_1(x_0,\xi_0).
$$
Now  (3) is obtained by letting $n\rightarrow\infty$.

Now we estimate the moments of second quasiderivative $\eta_t$. Based on our definition, we have
$$
d\eta_t=[\sigma_{(\eta_t)}+G(x_t,\xi_t)]dw_t+[b_{(\eta_t)}+H(x_t,\xi_t)]dt,
$$
with
\begin{align*}
&G(x,\xi)=\sigma_{(\xi)(\xi)}+2r\sigma_{(\xi)}+(2\sigma_{(\xi)}+2r\sigma+\sigma P)P+(\hat{r}-r^2)\sigma,\\
&H(x,\xi)=b_{(\xi)(\xi)}+4rb_{(\xi)}+2\pxsops b.
\end{align*}
Therefore, we have the estimates
\begin{equation*}
\|G\|\le N|\xi|(|\xi|+\frac{|\psi_{(\xi)}|}{\psi}),\qquad|H|\le N(|\xi|^2+\pxsops).
\end{equation*}
It\^o's formula implies
\begin{align*}
&d(|\eta_t|^2e^{2\varphi})=\theta(x_t,\xi_t,\eta_t)dt+\mu^k(x_t,\xi_t,\eta_t)dw_t^k,
\end{align*}
where
\begin{align*}
\theta(x,\xi,\eta)=e^{2\varphi}\Big\{&2|\eta|^2\big[(1-\frac{\psi}{2\lambda})L\psi-\frac{A}{4\lambda}+(1-\frac{\psi}{2\lambda})^2A\big]+\|\sigma_{(\eta)}+G(x,\xi)\|^2\\
&+2(\eta,b_{(\eta)}+H(x,\xi))+2(\eta,\sigma_{(\eta)}+G(x,\xi))[(1-\frac{\psi}{2\lambda})\psk]\Big\}.
\end{align*}
It is not hard to see that, for any $x\in D_\delta^\lambda$,
\begin{align*}
\theta(x,\xi,\eta)\le e^{2\varphi}\Big\{2(1-\frac{1}{4\lambda})A|\eta|^2+N\big[|\eta|^2+|\xi|^2(|\xi|^2+\pxsops)+|\eta|(|\xi|^2+\pxsops)\big]\Big\}.
\end{align*}
So for sufficiently small $\lambda$, we have
$$
\theta(x,\xi,\eta)+\lambda_0|\eta|^2\le Ne^{2\varphi}(|\xi|^2+|\eta|)(|\xi|^2+\pxsops).
$$
Then for any bounded stopping time $\gamma$ with respect to $\{\mathcal{F}_t\}$, we have
$$
E(e^{2\varphi}|\eta_{\gamma}|^2)+\lambda_0 E\int_0^{\gamma}|\eta_t|^2dt\le E\int_0^{\gamma}Ne^{2\varphi}(|\xi_t|^2+|\eta_t|)(|\xi_t|^2+\frac{\psi_{(\xi_t)}^2}{\psi^2})dt.
$$
Let $\tau_n=\tau_1^\delta\wedge\inf\{t\ge0:e^{\varphi}|\eta_t|\ge n\}$. Recall that $\eta_0=0$. By Theorem III.6.8 in \cite{MR1311478}, we have
\begin{align*}
&E\sup_{t\le\tau_n}(e^{\varphi}|\eta_t|)\\
\le& 3E\Big(\int_0^{\tau_n}Ne^{2\varphi}(|\xi_t|^2+|\eta_t|)(|\xi_t|^2+\frac{\psi_{(\xi_t)}^2}{\psi^2})dt\Big)^{\frac{1}{2}}\\
\le& E\Big[\Big(\int_0^{\tau_n}9Ne^{2\varphi}|\xi_t|^2(|\xi_t|^2+\frac{\psi_{(\xi_t)}^2}{\psi^2})dt\Big)^{\frac{1}{2}}+\Big(\int_0^{\tau_n}9Ne^{2\varphi}|\eta_t|(|\xi_t|^2+\frac{\psi_{(\xi_t)}^2}{\psi^2})dt\Big)^{\frac{1}{2}}\Big]\\
\le& E\Big[N\sup_{t\le\tau_n}|\xi_t|\cdot\Big(\int_0^{\tau_n}|\xi_t|^2+\frac{\psi_{(\xi_t)}^2}{\psi^2}dt\Big)^{\frac{1}{2}}+\sup_{t\le\tau_n}\sqrt{e^{\varphi}|\eta_t|}\cdot\Big(\int_0^{\tau_n}N(|\xi_t|^2+\frac{\psi_{(\xi_t)}^2}{\psi^2})dt\Big)^{\frac{1}{2}}\Big]\\
\le& NE\sup_{t\le\tau_n}|\xi_t|^2+NE\int_0^{\tau_n}\bigg(|\xi_t|^2+\frac{\psi_{(\xi_t)}^2}{\psi^2}\bigg)dt+\frac{1}{2}E\sup_{t\le\tau_n}(e^{\varphi}|\eta_t|)\\
\le&\frac{1}{2}E\sup_{t\le\tau_n}(e^{\varphi}|\eta_t|)+N\mathrm{B}_1(x_0,\xi_0),
\end{align*}
which implies that
$$E\sup_{t\le\tau_n}|\eta_t|\le E\sup_{t\le\tau_n}(e^{\varphi}|\eta_t|)\le N\mathrm{B}_1(x_0,\xi_0),$$
$$E\Big(\int_0^{\tau_n}|\eta_t|^2 dt\Big)^\frac{1}{2}\le N\mathrm{B}_1(x_0,\xi_0).$$
Letting $n\rightarrow\infty$, we conclude that (4) and (5) are true.
\end{proof}

\begin{lemma} \label{3l2}
Introduce
$$
\mathrm{B}_2(x,\xi)=\lambda^\frac{3}{4}|\xi|^2.
$$

If we construct first and second quasiderivatives by (\ref{2d}) and (\ref{2e}), in which
\allowdisplaybreaks\begin{align*}
&r(x,y):=(\rho(x),y),\ \ r_t:=r(x_t,\xi_t),\ \ \hat{r}_t:=r(x_t,\eta_t),\\
&\pi(x,y):=\frac{M(x)}{2}\sigma^*(x)y,\ \ \pi_t:=\pi(x_t,\xi_t),\ \ \hat{\pi}_t:=\pi(x_t,\eta_t),\\
&P(x,y):=Q(x,y),\ \ P_t:=P(x_t,\xi_t),\ \ \hat{P}_t:=P(x_t,\eta_t).
\end{align*}
Then for sufficiently small $\lambda$, when $x_0\in D_{\lambda^2}$, $\xi_0\in\Rd$ and $\eta_0=0$, we have
\begin{enumerate}
\item $e^{-\phi_t}\mathrm{B}_2(x_t,\xi_t)$ and $\sqrt{e^{-\phi_t}\mathrm{B}_2(x_t,\xi_t)}$ are local supermartingales on $[0,\tau_2)$, where $\tau_2=\tau_{D_{\lambda^2}}(x)$;
\item $\displaystyle{E\int_0^{\tau_2} e^{-\phi_t}|\xi_t|^2 dt\le N\mathrm{B}_2(x_0,\xi_0)}$;
\item $\displaystyle{E\sup_{t\le\tau_2}e^{-\phi_t}|\xi_t|^2\le N\mathrm{B}_2(x_0,\xi_0)}$;
\item $\displaystyle{Ee^{-\phi_{\tau_2}}|\eta_{\tau_2}|\le E\sup_{t\le\tau_2}e^{-\phi_t}|\eta_t|\le N\mathrm{B}_2(x_0,\xi_0)}$;
\item $\displaystyle{E\Big(\int_0^{\tau_2} e^{-2\phi_t}|\eta_t|^2 dt\Big)^\frac{1}{2}\le N\mathrm{B}_2(x_0,\xi_0)}$;
\item The above inequalities are still all true if we replace $\phi_t$ by $\phi_t-\frac{1}{2}t$. More precisely, we have
$$E\int_0^{\tau_2} e^{-\phi_t+\frac{1}{2}t}|\xi_t|^2 dt\le N\mathrm{B}_2(x_0,\xi_0),\ E\sup_{t\le\tau_2}e^{-\phi_t+\frac{1}{2}t}|\xi_t|^2\le N\mathrm{B}_2(x_0,\xi_0),$$
$$E\Big(\int_0^{\tau_2} e^{-2\phi_t+t}|\eta_t|^2 dt\Big)^\frac{1}{2}\le N\mathrm{B}_2(x_0,\xi_0),\ E\sup_{t\le\tau_2}e^{-\phi_t+\frac{1}{2}t}|\eta_t|\le N\mathrm{B}_2(x_0,\xi_0),$$
\end{enumerate}
where $N$ is constant depending on $K_0$ and $\lambda$.
\end{lemma}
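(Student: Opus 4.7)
The plan is to parallel the proof of Lemma \ref{3l1}, substantially simplified because we work in the interior subdomain $D_{\lambda^2}$, where the auxiliary functions $\rho, Q, M$ of Assumption \ref{ic} are bounded, and the barrier $\mathrm{B}_2 = \lambda^{3/4}|\xi|^2$ is just a multiple of $|\xi|^2$. First I would apply It\^o's formula to $e^{-\phi_t}|\xi_t|^2$ using (\ref{2d}) with the specified $r, \pi, P$; setting $\bar\sigma := \sigma_{(\xi_t)} + (\rho, \xi_t)\sigma + \sigma Q(x_t, \xi_t)$ and observing that $\sigma\pi_t = M(x_t)\, a\, \xi_t$, direct computation gives
$$\|\bar\sigma\|^2 + 2(\xi_t, b_{(\xi_t)} + 2(\rho,\xi_t)b - \sigma\pi_t) = \bigl\{\text{LHS of }(\ref{ineq})\text{ at }y = \xi_t\bigr\} - 2M(a\xi_t, \xi_t).$$
Assumption \ref{ic} (in the form (\ref{ineq}), available after Remark \ref{reduction1} gives $c\ge 1$) together with $M \ge 0$ (WLOG, since enlarging $M$ preserves the inequality) bounds this by $(c-1)|\xi_t|^2 - M(a\xi_t, \xi_t)$. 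Combining with the $-c\, e^{-\phi_t}|\xi_t|^2$ piece from $d e^{-\phi_t}$ yields
$$d\bigl(e^{-\phi_t}|\xi_t|^2\bigr) \le -e^{-\phi_t}|\xi_t|^2\, dt + d(\text{local martingale}),$$
which proves (1) (the square root inherits the supermartingale property from concavity of $\sqrt{\cdot}$), and (2) by optional stopping. Item (3) then follows from the BDG-absorption argument of Lemma \ref{3l1}(3): write $\sup Z \le Z_0 + \sup|N|$ with $Z_t := e^{-\phi_t}|\xi_t|^2$ and $N$ the martingale part, use $\langle N\rangle \le C\sup Z \cdot \int Z\, dt$, BDG, and Cauchy-Schwarz to absorb.

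For the second quasiderivative, I would apply It\^o to $e^{-2\phi_t}|\eta_t|^2$. The drift of $|\eta_t|^2$ splits into an $\eta$-linear part controlled by (\ref{ineq}) applied to $y = \eta_t$ (via the corresponding $\hat r, \hat\pi, \hat P$), and source terms $G(x,\xi), H(x,\xi)$ collecting the $\sigma_{(\xi)(\xi)}, b_{(\xi)(\xi)}, r_s\sigma_{(\xi)},\ldots$ contributions of (\ref{2e}), which by boundedness of all auxiliary processes in $D_{\lambda^2}$ satisfy $\|G\|, |H| \le N|\xi|^2$. Bounding the cross terms by $2\langle A, B\rangle \le \epsilon\|A\|^2 + \epsilon^{-1}\|B\|^2$ and absorbing yields
$$d\bigl(e^{-2\phi_t}|\eta_t|^2\bigr) \le e^{-2\phi_t}\!\Bigl[-\tfrac{1}{2}|\eta_t|^2 + N|\xi_t|^4\Bigr]dt + d(\text{local mart.}).$$
Integrating with $\eta_0 = 0$, item (5) reduces to the estimate $E\int_0^{\tau_2} e^{-2\phi_t}|\xi_t|^4\, dt \le N|\xi_0|^4$, and item (4) follows by applying Theorem III.6.8 of \cite{MR1311478} to $e^{-\phi_t}|\eta_t| = (e^{-2\phi_t}|\eta_t|^2)^{1/2}$ exactly as in the proof of Lemma \ref{3l1}(4).

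The main obstacle is this auxiliary $L^2$ estimate $E\int_0^{\tau_2} Z_t^2\, dt \le N Z_0^2$. Pointwise $\int Z^2\, dt \le (\sup Z)(\int Z\, dt)$, so by Cauchy-Schwarz this requires second-moment bounds on both $\sup Z$ and $\int Z\, dt$, upgrading the first-moment bounds of (2)-(3). I would obtain these by iterating the BDG-absorption argument: for $p \ge 1$, the quantities $X_p := E\sup_t Z_t^p$ and $Y_p := E(\int_0^{\tau_2} Z_t\, dt)^p$ satisfy coupled inequalities $X_p, Y_p \le C(Z_0^p + \sqrt{X_p Y_p})$, which on summing and using $\sqrt{X_p Y_p} \le (X_p + Y_p)/2$ resolve to $X_p, Y_p \le N_p Z_0^p$ (localizing by $\tau_n = \tau_2 \wedge \inf\{t: Z_t \ge n\}$ to ensure finiteness). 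Finally, item (6) requires no new work: the strict slack in the drift bound for $d(e^{-\phi_t}|\xi_t|^2)$ leaves room to absorb an extra factor of $e^{t/2}$, so $e^{-\phi_t + t/2}|\xi_t|^2$ still has strictly negative drift (with coefficient $-1/2$), and the arguments above rerun verbatim with $\phi_t$ replaced by $\phi_t - t/2$.
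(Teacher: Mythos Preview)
Your outline for items (1)--(3) and (6) is exactly the paper's argument, and your setup for (4)--(5) (It\^o on $e^{-2\phi_t}|\eta_t|^2$, splitting off $G,H$ with $\|G\|,|H|\le N|\xi|^2$, drift $\le -|\eta|^2 + N|\xi|^4$) matches as well. The divergence is in what you call ``the main obstacle.''

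You reduce (4)--(5) to the strong estimate $E\int_0^{\tau_2} e^{-2\phi_t}|\xi_t|^4\,dt \le N|\xi_0|^4$ and propose to get it by iterating BDG to second moments. But your coupled inequalities $X_p,Y_p \le C(Z_0^p + \sqrt{X_pY_p})$ do \emph{not} resolve by summing and applying $\sqrt{X_pY_p}\le (X_p+Y_p)/2$: that yields $(1-C)(X_p+Y_p)\le 2CZ_0^p$, which is vacuous once $C\ge 1$, and here $C$ carries the BDG constant and the Lipschitz bound on $\bar\sigma$, so $C<1$ is not available. Weighted AM--GM variants run into the same circularity, because $\sup Z$ and $\int Z$ are bounded by the \emph{same} quantity $Z_0+\sup|M|$ and the quadratic variation of $M$ involves $Z^2$ again.

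The paper avoids this entirely by observing that Theorem~III.6.8 only requires the \emph{weaker} quantity
\[
E\Big(\int_0^{\tau_2} e^{-2\phi_t}|\xi_t|^4\,dt\Big)^{1/2},
\]
and this follows at once from (2) and (3): factor $e^{-2\phi_t}|\xi_t|^4 = (e^{-\phi_t}|\xi_t|^2)\cdot(e^{-\phi_t}|\xi_t|^2)$, pull one factor out as a sup inside the square root, and apply $ab\le \tfrac12 a^2+\tfrac12 b^2$ to obtain
\[
E\Big(\int_0^{\tau_2} e^{-2\phi_t}|\xi_t|^4\,dt\Big)^{1/2}
\le \tfrac12\,E\sup_{t\le\tau_2} e^{-\phi_t}|\xi_t|^2 + \tfrac12\,E\int_0^{\tau_2} e^{-\phi_t}|\xi_t|^2\,dt
\le N\mathrm{B}_2(x_0,\xi_0).
\]
Both (4) and (5) then drop out of Theorem~III.6.8 with no second-moment input. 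So your obstacle is self-imposed; once you target $E(\int Z^2\,dt)^{1/2}$ rather than $E\int Z^2\,dt$, the proof closes using only the first-moment bounds you already have.
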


\begin{proof}
First of all, replacing $K_0$ by
$$
\max\bigg\{K_0,\sup_{x\in D_{\lambda^2}}|\rho(x)|,\sup_{x\in D_{\lambda^2}, y\in \Rd}\frac{\|Q(x,y)\|}{|y|},\sup_{x\in D_{\lambda^2}}M(x)\bigg\},
$$
we may assume that
$$
\sup_{x\in D_{\lambda^2}}|\rho(x)|\le K_0,\ \ \sup_{x\in D_{\lambda^2}}\|Q(x,y)\|\le K_0|y|,\forall y\in \Rd,\ \  \sup_{x\in D_{\lambda^2}}M(x)\le K_0.
$$

By It\^o's formula, for $t<\tau_2$, we have
$$
d|\xi_t|^2=\Lambda^k_2(x_t,\xi_t)dw_t^k+\Gamma_2(x_t,\xi_t)dt,
$$
where
$$
\Lambda_2(x,\xi)=2(\xi_t,\sxt+r_t\sigma+\sigma P_t),
$$
\allowdisplaybreaks\begin{equation*}
\Gamma_2(x,\xi)=\big[2(\xi,b_{(\xi)}+2rb-\sigma\pi)+\|\sigma_{(\xi)}+r\sigma+\sigma P\|^2\big]\le (c-1)|\xi|^2.
\end{equation*}

So
\begin{equation}\label{bd}
\begin{gathered}
d\big(e^{-\phi_t}|\xi_t|^2\big)=e^{-\phi_t}\big[\Gamma_2(x_t,\xi_t)-c(x_t)|\xi_t|^2\big]dt+dm_t\\
\le-e^{-\phi_t}|\xi_t|^2dt+dm_t,
\end{gathered}
\end{equation}
where $m_t$ is a local martingale.

Thus $e^{-\phi_t}\mathrm{B}_2(x_t,\xi_t)$ is a local supermartingale on $[0,\tau_2)$.

Also, notice that $f(x)=\sqrt{x}$ is concave, so $\sqrt{e^{-\phi_t}\mathrm{B}_2(x_t,\xi_t)}$ is a local supermartingale on $[0,\tau_2]$. (1) is proved.

From (\ref{bd}), we also have
\begin{equation*}
E\int_0^{\tau_2} e^{-\phi_t}|\xi_t|^2 dt=\mathrm{B}_2(x_0,\xi_0)-Ee^{-\phi_{\tau_2}}\mathrm{B}_2(x_{\tau_2},\xi_{\tau_2})\le\mathrm{B}_2(x_0,\xi_0),
\end{equation*}
which proves (2).

Since
$$
e^{-\phi_t}|\xi_t|^2=|\xi_0|^2+\int_0^te^{-\phi_s}\big[2(\xi_s,\bar b)+\|\bar\sigma\|^2-c|\xi_t|^2\big]ds+\int_0^te^{-\phi_s}2(\xi_s,\bar\sigma)dw_s,
$$
by Burkholder-Davis-Gundy inequality, for $\tau_n=\tau_2\wedge\inf\{t\ge0:|\xi_t|\ge n\}$, we have,
\begin{align*}
E\sup_{t\le\tau_n}e^{-\phi_t}|\xi_t|^2\le& |\xi_0|^2+\int_0^{\tau_n}e^{-\phi_t}\big[2|\xi_t|\cdot|\bar b|+\|\bar\sigma\|^2+c|\xi_t|^2\big]dt\\
&+12E\Big(\int_0^{\tau_n}e^{-2\phi_t}|(\xi_t,\bar\sigma)|^2dt\Big)^{\frac{1}{2}}\\
\le&|\xi_0|^2+NE\int_0^{\tau_n}e^{-\phi_t}|\xi_t|^2dt+E\Big(\int_0^{\tau_n} Ne^{-2\phi_t}|\xi_t|^4dt\Big)^{\frac{1}{2}}\\
\le&N\mathrm{B}_2(x_0,\xi_0)+E\Big[\sup_{t\le\tau_n}e^{-\frac{1}{2}\phi_t}|\xi_t|\cdot\Big(\int_0^{\tau_n} Ne^{-\phi_t}|\xi_t|^2dt\Big)^{\frac{1}{2}}\Big]\\
\le&N\mathrm{B}_2(x_0,\xi_0)+\frac{1}{2}E\sup_{t\le\tau_n}e^{-\phi_t}|\xi_t|^2+\frac{1}{2}E\Big(\int_0^{\tau_n} Ne^{-\phi_t}|\xi_t|^2dt\Big)\\
\le&N\mathrm{B}_2(x_0,\xi_0)+\frac{1}{2}E\sup_{t\le\tau_n}e^{-\phi_t}|\xi_t|^2,
\end{align*}
which implies that
$$
E\sup_{t\le\tau_n}e^{-\phi_t}|\xi_t|^2\le N\mathrm{B}_2(x_0,\xi_0).
$$
So (3) is true by letting $n\rightarrow\infty$.

Now we estimate the moments of the second quasiderivative $\eta_t$. Based on our definition, we have
$$
d\eta_t=[\tilde\sigma+G]dw_t+[\tilde b+H]dt,
$$
where
\begin{align*}
&\tilde\sigma=\bar\sigma(x,\eta)=\sigma_{(\eta)}+\hat r\sigma+\sigma\hat{P},\\
&\tilde b=\bar b(x,\eta)=b_{(\eta)}+2\hat r b-\sigma\hat\pi,\\
&G=G(x,\xi)=\sigma_{(\xi)(\xi)}+2r\sigma_{(\xi)}-r^2\sigma+(2\sigma_{(\xi)}+2r\sigma+\sigma P)P,\\
&H=H(x,\xi)=b_{(\xi)(\xi)}+4rb_{(\xi)}-2(\sigma_{(\xi)}+r\sigma-\sigma P)\pi.
\end{align*}
From the expressions above, we have the estimates
\begin{equation*}
\|G\|\le N|\xi|^2,\qquad|H|\le N|\xi|^2.
\end{equation*}
Hence It\^o's formula implies
$$
d\big(e^{-2\phi_t}|\eta_t|^2\big)=e^{-2\phi_t}\big[2(\eta_t,\tilde b+H)+\|\tilde\sigma+G\|^2-2c|\eta_t|^2\big]dt+2e^{-2\phi_t}(\eta_t,\tilde\sigma+G)dw_t^k.
$$
Notice that
\begin{align*}
&2(\eta,\tilde b+H)+\|\tilde\sigma+G\|^2-2c|\eta|^2\\
=&2(\eta,\tilde b)+\|\tilde\sigma\|^2-2c|\eta|^2+2(\eta, H)+|H|^2+2(\tilde\sigma^k,G^k)\\
\le&(c-1)|\eta|^2-2c|\eta|^2+|\eta|^2+N|\xi|^4\\
\le&-|\eta|^2+N|\xi|^4.
\end{align*}
So for any bounded stopping time $\gamma$ with respect to $\{\mathcal{F}_t\}$, we have
$$
Ee^{-2\phi_\gamma}|\eta_\gamma|^2+E\int_0^{\gamma}e^{-2\phi_t}|\eta_t|^2dt\le E\int_0^{\gamma}Ne^{-2\phi_t}|\xi_t|^4dt.
$$
Recall that $\eta_0=0$. By Theorem III.6.8 in \cite{MR1311478}, we have
\begin{align*}
E\sup_{t\le\tau_2}e^{-\phi_t}|\eta_t| \le& 3E\Big(\int_0^{\tau_2}Ne^{-2\phi_t}|\xi_t|^4dt\Big)^{\frac{1}{2}}\\
\le& E\Big[\sup_{t\le\tau_2}e^{-\frac{1}{2}\phi_t}|\xi_t|\cdot\Big(\int_0^{\tau_2}9Ne^{-\phi_t}|\xi_t|^2dt\Big)^{\frac{1}{2}}\Big]\\
\le& \frac{1}{2}E\sup_{t\le\tau_2}e^{-\phi_t}|\xi_t|^2+\frac{1}{2}E\int_0^{\tau_2}9Ne^{-\phi_t}|\xi_t|^2dt\\
\le&N\mathrm{B}_2(x_0,\xi_0),
\end{align*}
$$E\Big(\int_0^{\tau_2}e^{-2\phi_t}|\eta_t|^2dt\Big)^\frac{1}{2}\le3E\Big(\int_0^{\tau_2}Ne^{-2\phi_t}|\xi_t|^4dt\Big)^{\frac{1}{2}}\le N\mathrm{B}_2(x_0,\xi_0),$$
which implies that (4) and (5) are true.

Finally, rewritting $c-1$ by $(c-\frac{1}{2})-\frac{1}{2}$ and repeating the argument above, we conclude that (6) is true.

\end{proof}

Now we are ready to prove the theorem.

\begin{proof}[Proof of (\ref{3d})] 

Denote $\tau_{D_\delta^\lambda}(x)$ and $\tau_{D_{\lambda^2}}(x)$ by $\tau_1^\delta$ and $\tau_2$, respectively.

From (\ref{1b}) we immediately have
\begin{equation}\label{u}
|u|_{0,D}\le|g|_{0,D}+|f|_{0,D}E\int_0^\tau e^{-t}dt\le|g|_{0,D}+|f|_{0,D}.
\end{equation}

When $x_0\in D_\delta^\lambda$, by Theorem \ref{2cc}, we have
$$u_{(\xi_0)}(x_0)=X_0=EX_{\tau_1^\delta}.$$
So from (\ref{Xi}) and (\ref{u}),
\begin{align*}
|u_{(\xi_0)}(x_0)|\le &E\Big|u_{(\xi_{\tau_1^\delta})}(x_{\tau_1^\delta})+(\xi_{\tau_1^\delta}^0+\xi_{\tau_1^\delta}^{d+1})u(x_{\tau_1^\delta})\Big|\\
&+|f|_{1,D}E\int_0^{\tau_1^\delta}e^{-s}\Big(|\xi_s|+2r_s+|\xi_s^0|+|\xi_s^{d+1}|\Big)ds\\
\le &E\Big|u_{(\xi_{\tau_1^\delta})}(x_{\tau_1^\delta})\Big|+\Big(|g|_{0,D}+|f|_{0,D}\Big)\Big(E|\xi_{\tau_1^\delta}^0|+E|\xi_{\tau_1^\delta}^{d+1}|\Big)\\
&+|f|_{1,D}\Big(E\int_0^{\tau_1^\delta}|\xi_s|+2r_sds+E\sup_{t\le\tau_1^\delta}|\xi_t^{0}|+E\sup_{t\le\tau_1^\delta}|\xi_t^{d+1}|\Big).
\end{align*}
By Lemma \ref{3l1}, Davis inequality and H\"older inequality,
\allowdisplaybreaks\begin{align}
\nonumber E|u_{(\xi_{\tau_1^\delta})}(x_{\tau_1^\delta})|\le&\sup_{ x\in \partial D_\delta^\lambda}\uxosbw\cdot E\sqrt{\mathrm{B}_1(x_{\tau_1^\delta},\xi_{\tau_1^\delta})}\\
\nonumber\le&\sup_{x\in \partial D_\delta^\lambda}\uxosbw\cdot \sbwxzxz,\\
E|\xi_{\tau_1^\delta}^0|\le E\sup_{t\le\tau_1^\delta}|\xi_t^{0}|\le&3E\langle\xi^0\rangle_{\tau_1^\delta}^\frac{1}{2}\le3\Big(E\langle\xi^0\rangle_{\tau_1^\delta}\Big)^\frac{1}{2}\\
\nonumber\le &N\bigg(E\int_0^{\tau_1^\delta}\frac{\psi_{(\xi_s)}^2}{\psi^2}ds\bigg)^\frac{1}{2}\le N\sbwxzxz,\\
E|\xi_{\tau_1^\delta}^{d+1}|\le E\sup_{t\le\tau_1^\delta}|\xi_t^{d+1}|\le& NE\int_0^{\tau_1^\delta}|\xi_s|+\frac{|\psi_{(\xi_s)}|}{\psi}ds\\
\nonumber\le&NE\int_0^\infty I_{s\le\tau_1^\delta}\cdot I_{s\le\tau_1^\delta}\Big(|\xi_s|+\frac{|\psi_{(\xi_s)}|}{\psi}\Big)ds\\
\nonumber\le&N\bigg(E\tau_1^\delta\bigg)^{\frac{1}{2}}\bigg(E\int_0^{\tau_1^\delta}\Big(|\xi_s|^2+\frac{\psi_{(\xi_s)}^2}{\psi^2}\Big)ds\bigg)^\frac{1}{2}\\
\nonumber\le&N\sbwxzxz,\\
E\int_0^{\tau_1^\delta}(|\xi_s|+2r_s)ds\le&NE\int_0^{\tau_1^\delta}\bigg(|\xi_s|+\frac{|\psi_{(\xi_s)}|}{\psi}\bigg)ds\le N\sbwxzxz.
\end{align}
Collecting all estimates above, we conclude that
$$
\uxzxz\le\sup_{x\in \partial D_\delta^\lambda}\uxosbw\cdot \sbwxzxz+N(|g|_{0,D}+|f|_{1,D})\sbwxzxz.
$$
So for any $x_0\in D_\delta^\lambda$, $\xi_0\in\Rd\setminus\{0\}$, we have
\begin{equation}\label{3h}
\frac{\uxzxz}{\sbwxzxz}\le\sup_{x\in \partial D_\delta^\lambda}\uxosbw+N_1,
\end{equation}
with
\begin{equation}\label{N1}
N_1=N(|g|_{1,D}+|f|_{1,D}).
\end{equation}
Similarly, when $x_0\in D_{\lambda^2}$, by Theorem \ref{2cc}, we have
$$u_{(\xi_0)}(x_0)=X_0=EX_{\tau_2}.$$
Again, from (\ref{Xi}) and (\ref{u}),
\begin{align*}
|u_{(\xi_0)}(x_0)|\le &Ee^{-\phi_{\tau_2}}\Big|u_{(\xi_{\tau_2})}(x_{\tau_2})+(\xi^0_{\tau_2}+\xi_{\tau_2}^{d+1})u(x_{\tau_1^\delta})\Big|\\
&+|f|_{1,D}E\int_0^{\tau_2}e^{-\phi_s}\Big(|\xi_s|+2r_s+|\xi_s^0|+|\xi_s^{d+1}|\Big)ds\\
\le&Ee^{-\frac{1}{2}\phi_{\tau_2}}\Big|u_{(\xi_{\tau_2})}(x_{\tau_2})\Big|+\Big(|g|_{0,D}+|f|_{0,D}\Big)\Big(Ee^{-\frac{1}{2}\phi_{\tau_2}}|\xi_{\tau_2}^0|+Ee^{-\phi_{\tau_2}}|\xi_{\tau_2}^{d+1}|\Big)\\
&+|f|_{1,D}\bigg(E\int_0^{\tau_2}e^{-\phi_s}\big(|\xi_s|+2r_s\big)ds+4E\sup_{t\le\tau_2}e^{-\frac{3}{4}\phi_t}|\xi_t^{d+1}|+2E\sup_{s\le{\tau_2}}e^{-\frac{1}{2}\phi_{s}}|\xi_{s}^0|\bigg).
\end{align*}
By Lemma \ref{3l2}, Davis inequality and H\"older inequality,
\allowdisplaybreaks\begin{align*}
Ee^{-\frac{1}{2}\phi_{\tau_2}}|u_{(\xi_{\tau_2})}(x_{\tau_2})|\le&\sup_{ x\in \partial D_{\lambda^2}}\uxosbt\cdot E\sqrt{e^{-\phi_{\tau_2}}\mathrm{B}_2(x_{\tau_2},\xi_{\tau_2})}\\
\le&\sup_{x\in \partial D_{\lambda^2}}\uxosbt\cdot \sbtxzxz,\\
Ee^{-\frac{1}{2}\phi_{\tau_2}}|\xi_{\tau_2}^0|\le E\sup_{s\le{\tau_2}}e^{-\frac{1}{2}\phi_{s}}|\xi_{s}^0|=&E\sup_{s\le\tau_2}\Big|\int_0^{s}e^{-\frac{1}{2}\phi_s}\pi_rdw_r\Big|\\
\le&3E\Big(\int_0^{\tau_2}e^{-\phi_{r}}|\pi_r|^2dr\Big)^{\frac{1}{2}}\\
\le&NE\Big(\int_0^{\tau_2}e^{-\phi_r}|\xi_r|^2dr\Big)^\frac{1}{2}\\
\le&N\sbtxzxz,\\
Ee^{-\phi_{\tau_2}}|\xi_{\tau_2}^{d+1}|\le E\sup_{t\le\tau_2}e^{-\frac{3}{4}\phi_t}|\xi_t^{d+1}|\le&NE\int_0^{\tau_2}e^{-\frac{3}{4}\phi_s}|\xi_s|ds\\
\le&NE\Big(\int_0^{\tau_2}e^{-\frac{1}{2}\phi_s}ds\Big)^\frac{1}{2}\Big(\int_0^{\tau_2}e^{-\phi_s}|\xi_s|^2ds\Big)^\frac{1}{2}\\
\le&N\bigg(E\int_0^{\tau_2}e^{-\phi_s}|\xi_s|^2ds\bigg)^\frac{1}{2}\\
\le&N\sbtxzxz,\\
E\int_0^{\tau_2}e^{-\phi_s}\big(|\xi_s|+2r_s\big)ds\le&NE\int_0^{\tau_2}e^{-\phi_s}|\xi_s|ds\le N\sbtxzxz.
\end{align*}
Collecting all estimates above, we conclude that
$$
\uxzxz\le\sup_{x\in \partial D_{\lambda^2}}\uxosbt\cdot \sbtxzxz+N(|g|_{0,D}+|f|_{1,D})\sbtxzxz.
$$
So for any $x_0\in D_{\lambda^2}$, $\xi_0\in\Rd\setminus\{0\}$, we have
\begin{equation}\label{3i}
\frac{\uxzxz}{\sbtxzxz}\le\sup_{x\in \partial D_{\lambda^2}}\uxosbt+N_1,
\end{equation}
with $N_1$ defined by (\ref{N1}).

Notice that
\begin{equation*}
B_1(x,\xi)\left\{
\begin{array}{ll}
\ge\sqrt\psi(1+\sqrt\psi)|\xi|^2\ge\lambda^\frac{1}{2}|\xi|^2&\mbox{on }\{\psi=\lambda\}\\
\displaystyle\le\lambda(2+\lambda)|\xi|^2+K_1(2\lambda^2)^\frac{3}{2}\frac{\psi_{(\xi)}^2}{\lambda^2}\le K\lambda|\xi|^2&\mbox{on }\{\psi=\lambda^2\}.
\end{array}
\right.  
\end{equation*}

Recall that $K$ doesn't depend on $\lambda$. So for sufficiently small $\lambda$, we have
$$B_1(x,\xi)\ge 4B_2(x,\xi)\mbox{ when }\psi=\lambda,\qquad 4B_1(x,\xi)\le B_2(x,\xi)\mbox{ when }\psi=\lambda^2.$$

Then on $\{x\in D:\psi(x)=\lambda\}$, we have
\begin{align*}
\uxosbw\le&\frac{1}{2}\uxosbt\\
\le&\frac{1}{2}(\sup_{\{\psi=\lambda^2\}}\uxosbt+N_1)\\
\le&\frac{1}{4}\sup_{\{\psi=\lambda^2\}}\uxosbw+\frac{N_1}{2}\\
\le&\frac{1}{4}(\sup_{\{\psi=\lambda\}}\uxosbw+\sup_{\{\psi=\delta\}}\uxosbw+N_1)+\frac{N_1}{2}\\
=&\frac{1}{4}\sup_{\{\psi=\lambda\}}\uxosbw+\frac{1}{4}\sup_{\{\psi=\delta\}}\uxosbw+\frac{3N_1}{4},
\end{align*}
which implies that
\begin{equation}\label{3j}
\sup_{\{\psi=\lambda\}}\uxosbw\le\frac{1}{3}\sup_{\{\psi=\delta\}}\uxosbw+N_1.
\end{equation}
Meanwhile, on $\{x\in D:\psi(x)=\lambda^2\}$, we have

\begin{align*}
\uxosbt\le&\frac{1}{2}\uxosbw\\
\le&\frac{1}{2}(\sup_{\{\psi=\lambda\}}\uxosbw+\sup_{\{\psi=\delta\}}\uxosbw+N_1)\\
\le&\frac{1}{2}(\frac{1}{3}\sup_{\{\psi=\delta\}}\uxosbw+N_1)+\frac{1}{2}\sup_{\{\psi=\delta\}}\uxosbw+\frac{N_1}{2}\\
=&\frac{2}{3}\sup_{\{\psi=\delta\}}\uxosbw+N_1.
\end{align*}
Therefore, 
\begin{equation}\label{3k}
\sup_{\{\psi=\lambda^2\}}\uxosbt\le\frac{2}{3}\sup_{\{\psi=\delta\}}\uxosbw+N_1.
\end{equation}
Combining (\ref{3h}) and (\ref{3j}), we get, for any $x\in D_\delta^\lambda$, $\xi\in\Rd\setminus\{0\}$,
\begin{equation}\label{3l}
\uxosbw\le\frac{4}{3}\sup_{\{\psi=\delta\}}\uxosbw+2N_1.
\end{equation}
Combining (\ref{3i}) and (\ref{3k}), we get, for any $x\in D_{\lambda^2}$, $\xi\in\Rd\setminus\{0\}$,
\begin{equation}\label{3m}
\uxosbt\le\frac{2}{3}\sup_{\{\psi=\delta\}}\uxosbw+2N_1.
\end{equation}

Thus it remains to estimate
$$
\varlimsup_{\delta\downarrow0}\Big(\sup_{\{\psi=\delta\}}\uxosbw\Big).
$$
Notice that for each $\delta$, there exist $x(\delta)\in\{\psi=\delta\}$ and $\xi(\delta)\in\{\xi:|\xi|=1\}$, such that
$$
\sup_{\{\psi=\delta\}}\uxosbw=\frac{|u_{(\xi(\delta))}(x(\delta))|}{\sqrt{\mathrm{B}_1(x(\delta),\xi(\delta))}}.
$$
A subsequence of $(x(\delta),\xi(\delta))$ converges to some $(y,\zeta)$, such that $y\in\partial D$ and $|\zeta|=1$.

If $\psi_{(\zeta)}(y)\ne0$, then $\mathrm{B}_1(x(\delta),\xi(\delta))\rightarrow\infty$ as $\delta\downarrow0$. In this case,
$$
\varlimsup_{\delta\downarrow0}\Big(\sup_{\{\psi=\delta\}}\uxosbw\Big)=\varlimsup_{\delta\downarrow0}\frac{|u_{(\xi(\delta))}(x(\delta))|}{\sqrt{\mathrm{B}_1(x(\delta),\xi(\delta))}}=0.
$$

If $\psi_{(\zeta)}(y)=0$, then $\zeta$ is tangential to $\partial D$ at $y$. In this case,
\begin{equation}\label{lims}
\begin{gathered}
\varlimsup_{\delta\downarrow0}\Big(\sup_{\{\psi=\delta\}}\uxosbw\Big)=\varlimsup_{\delta\downarrow0}\frac{|u_{(\xi(\delta))}(x(\delta))|}{\sqrt{\mathrm{B}_1(x(\delta),\xi(\delta))}}\\
\qquad\qquad\qquad\qquad\qquad\ \ =\frac{|g_{(\zeta)}(y)|}{\sqrt\lambda}\le N\sup_{\partial D}|g_x|.
\end{gathered}
\end{equation}

From (\ref{3l}), (\ref{3m}) and (\ref{lims}), we have
$$
\frac{|u_{(\xi)}(x)|}{\sqrt{\mathrm{B}_1(x,\xi)}}\le N(|f|_{1,D}+|g|_{1,D}), \mbox{ when }x\in D^\lambda;
$$
$$
\frac{|u_{(\xi)}(x)|}{\sqrt{\mathrm{B}_2(x,\xi)}}\le N(|f|_{1,D}+|g|_{1,D}), \mbox{ when }x\in D_{\lambda^2}.
$$
Notice that $D^\lambda\cup D_{\lambda^2}=D$, and 
$$
\sqrt{\mathrm{B}_1(x,\xi)}\le N(|\xi|+\frac{|\psi_{(\xi)}|}{\psi^{\frac{1}{2}}}), \mbox{ when }x\in D^\lambda;
$$
$$
\sqrt{\mathrm{B}_2(x,\xi)}\le N(|\xi|+\frac{|\psi_{(\xi)}|}{\psi^{\frac{1}{2}}}), \mbox{ when }x\in D_{\lambda^2}.
$$
We conclude that, for any $x\in D$ and $\xi\in\Rd$,
$$|u_{(\xi)}(x)|\le N(|\xi|+\frac{|\psi_{(\xi)}|}{\psi^{\frac{1}{2}}})(|f|_{1,D}+|g|_{1,D}).$$
The inequality (\ref{3d}) is proved.
\end{proof}

The proof of (\ref{3dd}) is similar.

\begin{proof}[Proof of (\ref{3dd})]
When $x_0\in D_\delta^\lambda$, by Theorem \ref{2cc}, we have
$$u_{(\xi_0)(\xi_0)}(x_0)=u_{(\xi_0)(\xi_0)}(x_0)+u_{(\eta_0)}(x_0)=Y_0=EY_{\tau_1^\delta}.$$
From (\ref{Zeta}) and (\ref{u}),
\begin{align*}
|u_{(\xi_0)(\xi_0)}(x_0)|\le &E|u_{(\xi_{\tau_1^\delta})(\xi_{\tau_1^\delta})}(x_{\tau_1^\delta})|+\sup_{x\in\partial D_\delta^\lambda,|\zeta|=1}|u_{(\zeta)}(x)|\cdot Ee^{-{\tau_1^\delta}}\Big(|\eta_{\tau_1^\delta}|+2|\tilde\xi^0_{\tau_1^\delta}||\xi_{\tau_1^\delta}|\Big)\\
+&\Big(|g|_{0,D}+|f|_{0,D}\Big)Ee^{-{\tau_1^\delta}}|\tilde\eta^0_{\tau_1^\delta}|+|f|_{2,D}E\int_0^{\tau_1^\delta} e^{-s}\Big[|\xi_s|^2+|\eta_s|\\
+&(4r_s+2|\tilde\xi^0_s|)|\xi_s|+2\hat r_s+4|\tilde\xi_s^0|r_s+|\tilde\eta^0_s|\Big]ds.
\end{align*}
Recall that in this case,
$$\tilde\xi_t^0=\xi^0_t+\xi_t^{d+1},\qquad\tilde\eta^0_t=2\xi_t^0\xi_t^{d+1}+(\xi_t^{d+1})^2+\eta_t^{d+1}.$$
It follows that
\begin{align*}
|u_{(\xi_0)(\xi_0)}(x_0)|\le &E|u_{(\xi_{\tau_1^\delta})(\xi_{\tau_1^\delta})}(x_{\tau_1^\delta})|+N\Big(|g|_{0,D}+|f|_{0,D}+\sup_{x\in\partial D_\delta^\lambda,|\zeta|=1}|u_{(\zeta)}(x)|\Big)\\
&\cdot Ee^{-{\tau_1^\delta}}\Big(|\eta_{\tau_1^\delta}|+|\xi_{\tau_1^\delta}|^2+|\xi^0_{\tau_1^\delta}|^2+|\xi_{\tau_1^\delta}^{d+1}|^2+|\eta_{\tau_1^\delta}^{d+1}|\Big)\\
+&N|f|_{2,D}E\int_0^{\tau_1^\delta} e^{-s}\Big[|\xi_s|^2+|\eta_s|+|\xi^0_{s}|^2+|\xi_{s}^{d+1}|^2+|\eta_{s}^{d+1}|+r_s^2+\hat r_s\Big]ds\\
\le&E|u_{(\xi_{\tau_1^\delta})(\xi_{\tau_1^\delta})}(x_{\tau_1^\delta})|+N\Big(|g|_{0,D}+|f|_{2,D}+\sup_{x\in\partial D_\delta^\lambda,|\zeta|=1}|u_{(\zeta)}(x)|\Big)\\
&\cdot\Big(E\sup_{t\le\tau_1^\delta}|\eta_t|+E\sup_{t\le\tau_1^\delta}|\xi_t|^2+E\sup_{t\le\tau_1^\delta}|\xi^0_t|^2+ E\sup_{t\le\tau_1^\delta}e^{-\frac{1}{2}t}|\xi_t^{d+1}|^2\\
&\ \ \ \ +E\sup_{t\le\tau_1^\delta}e^{-\frac{1}{2}t}|\eta_t^{d+1}|+E\int_0^{\tau_1^\delta}r_s^2+\hat{r}_sds\Big).
\end{align*}
By Lemma \ref{3l1}, Davis inequality and H\"older inequality,
\allowdisplaybreaks\begin{align}
\nonumber E|u_{(\xi_{\tau_1^\delta})(\xi_{\tau_1^\delta})}(x_{\tau_1^\delta})|\le&\sup_{ x\in \partial D_\delta^\lambda}\uxxobw\cdot E\mathrm{B}_1(x_{\tau_1^\delta},\xi_{\tau_1^\delta})\\
\nonumber\le&\sup_{x\in \partial D_\delta^\lambda}\uxxobw\cdot \bwxzxz,\\
E\sup_{t\le\tau_1^\delta}|\eta_t|\le&N\bwxzxz,\\
E\sup_{t\le\tau_1^\delta}|\xi_t|^2\le&N\bwxzxz,\\
E\sup_{t\le\tau_1^\delta}|\xi^0_t|^2\le&4E\langle\xi^0\rangle _{\tau_1^\delta}\le NE\int_0^{\tau_1^\delta}\pxtsops dt\le N\bwxzxz,\\
E\sup_{t\le\tau_1^\delta}e^{-\frac{1}{2}t}|\xi_t^{d+1}|^2\le&NE\sup_{t\le\tau_1^\delta}e^{-\frac{1}{2}t}\bigg(\int_0^t\Big(|\xi_s|+\frac{|\psi_{(\xi_s)}|}{\psi}\Big)ds\bigg)^2\\
\nonumber\le&NE\sup_{t\le\tau_1^\delta}e^{-\frac{1}{2}t}t\int_0^t\Big(|\xi_s|^2+\frac{\psi_{(\xi_s)}^2}{\psi^2} \Big)ds\\
\nonumber\le&NE\int_0^{\tau_1^\delta}\Big(|\xi_t|^2+\pxtsops\Big) dt\\
\nonumber\le&N\bwxzxz,\\
E\sup_{t\le\tau_1^\delta}e^{-\frac{1}{2}t}|\eta_t^{d+1}|\le&NE\sup_{t\le\tau_1^\delta}e^{-\frac{1}{2}t}\int_0^t\Big(|\xi_s|^2+\frac{\psi_{(\xi_s)}^2}{\psi^2}+|\eta_s|\Big)ds\\
\nonumber\le&NE\sup_{t\le\tau_1^\delta}e^{-\frac{1}{2}t}\bigg[\int_0^t\Big(|\xi_s|^2+\frac{\psi_{(\xi_s)}^2}{\psi^2}\Big)ds+\sqrt t\Big(\int_0^t|\eta_s|^2ds\Big)^\frac{1}{2}\bigg]\\
\nonumber\le&N\bigg[E\int_0^{\tau_1^\delta}\Big(|\xi_t|^2+\pxtsops\Big) dt+E\Big(\int_0^{\tau_1^\delta}|\eta_t|^2dt\Big)^2\bigg]\\
\nonumber\le&N\bwxzxz,\\
E\int_0^{\tau_1^\delta}(r_s^2+\hat{r}_s)ds\le&NE\int_0^{\tau_1^\delta}\Big(|\xi_t|^2+\frac{\psi_{(\xi_t)}^2}{\psi^2}\Big)dt\le N\bwxzxz.
\end{align}
Collecting all estimates above, we conclude that
\begin{align*}
\uxzxzxz\le&\sup_{x\in \partial D_\delta^\lambda}\uxxobw\cdot \bwxzxz\\
&+N\Big(|g|_{0,D}+|f|_{2,D}+\sup_{x\in\partial D_\delta^\lambda,|\zeta|=1}|u_{(\zeta)}(x)|\Big)\bwxzxz.
\end{align*}
So for any $x_0\in D_\delta^\lambda$, $\xi_0\in\Rd\setminus\{0\}$, we have
\begin{equation}\label{3o}
\frac{\uxzxzxz}{\bwxzxz}\le\sup_{x\in \partial D_\delta^\lambda}\uxxobw+N_2,
\end{equation}
with
\begin{equation}\label{N2}
N_2=N\Big(|g|_{2,D}+|f|_{2,D}+\sup_{x\in\partial D_\delta^\lambda,|\zeta|=1}|u_{(\zeta)}(x)|\Big).
\end{equation}

When $x_0\in D_{\lambda^2}$, by Theorem \ref{2cc}, we have
$$u_{(\xi_0)(\xi_0)}(x_0)=u_{(\xi_0)(\xi_0)}(x_0)+u_{(\eta_0)}(x_0)=Y_0=EY_{\tau_2}.$$

Again, from (\ref{Zeta}) and (\ref{u}),
\begin{align*}
|u_{(\xi_0)(\xi_0)}(x_0)|\le &Ee^{-\phi_{\tau_2}}|u_{(\xi_{\tau_2})(\xi_{\tau_2})}(x_{\tau_2})|+\sup_{x\in\partial D_{\lambda^2},|\zeta|=1}|u_{(\zeta)}(x)|\cdot Ee^{-\phi_{\tau_2}}\Big(|\eta_{\tau_2}|+2|\tilde\xi^0_{\tau_2}||\xi_{\tau_2}|\Big)\\
+&\Big(|g|_{0,D}+|f|_{0,D}\Big)Ee^{-\phi_{\tau_2}}|\tilde\eta^0_{\tau_2}|+|f|_{2,D}E\int_0^{\tau_2} e^{-\phi_s}\Big[|\xi_s|^2+|\eta_s|\\
+&(4r_s+2|\tilde\xi^0_s|)|\xi_s|+2\hat r_s+4|\tilde\xi_s^0|r_s+|\tilde\eta^0_s|\Big]ds.
\end{align*}
Recall that in this case,
$$\tilde\xi_t^0=\xi^0_t+\xi_t^{d+1},\qquad\tilde\eta^0_t=\eta_t^0+2\xi_t^0\xi_t^{d+1}+(\xi_t^{d+1})^2+\eta_t^{d+1}.$$
Also, notice that by (\ref{3d})
$$\sup_{x\in\partial D_{\lambda^2},|\zeta|=1}|u_{(\zeta)}(x)|\le N\Big(1+\frac{|\psi|_{1,D}}{\lambda^2}\Big)\Big(|f|_{1,D}+|g|_{1,D}\Big)\le N\Big(|f|_{1,D}+|g|_{1,D}\Big).$$
Therefore, 
\begin{align*}
|u_{(\xi_0)(\xi_0)}(x_0)|\le &Ee^{-\phi_{\tau_2}}|u_{(\xi_{\tau_2})(\xi_{\tau_2})}(x_{\tau_2})|+N\Big(|g|_{1,D}+|f|_{1,D}\Big)\\
&\cdot Ee^{-\phi_{\tau_2}}\Big(|\eta_{\tau_2}|+|\xi_{\tau_2}|^2+|\xi^0_{\tau_2}|^2+|\xi_{\tau_2}^{d+1}|^2+|\eta_{\tau_2}^{d+1}|+|\eta^0_{\tau_2}|\Big)\\
+&N|f|_{2,D}E\int_0^{\tau_2} e^{-\phi_s}\Big[|\xi_s|^2+|\eta_s|+|\xi^0_{s}|^2+|\xi_{s}^{d+1}|^2+|\eta_{s}^{d+1}|+|\eta^0_s|+r_s^2+\hat r_s\Big]ds\\
\le&Ee^{-\phi_{\tau_2}}|u_{(\xi_{\tau_2})(\xi_{\tau_2})}(x_{\tau_2})|+N\Big(|g|_{1,D}+|f|_{2,D}\Big)\\
&\cdot\Big(E\sup_{t\le\tau_2}e^{-\phi_t+\frac{1}{2}t}|\eta_t|+E\sup_{t\le\tau_2}e^{-\phi_t+\frac{1}{2}t}|\xi_t|^2+E\sup_{t\le\tau_2}e^{-\phi_t+\frac{1}{2}t}|\xi^0_t|^2\\
&\ \ \ \ + E\sup_{t\le\tau_2}e^{-\phi_t+\frac{1}{4}t}|\xi_t^{d+1}|^2+E\sup_{t\le\tau_2}e^{-\phi_t+\frac{1}{4}t}|\eta_t^{d+1}|\\
&\ \ \ \ +E\sup_{t\le\tau_2}e^{-\phi_t+\frac{1}{2}t}|\eta^0_t|+E\int_0^{\tau_2}e^{-\phi_s}\big(r_s^2+\hat{r}_s\big)ds\Big).
\end{align*}
By Lemma \ref{3l2}, Davis inequality and H\"older inequality,
\allowdisplaybreaks\begin{align*}
Ee^{-\phi_{\tau_2}}|u_{(\xi_{\tau_2})(\xi_{\tau_2})}(x_{\tau_2})|\le&\sup_{ x\in \partial D_{\lambda^2}}\uxxobt\cdot Ee^{-\phi_{\tau_2}}\mathrm{B}_2(x_{\tau_2},\xi_{\tau_2})\\
\le&\sup_{x\in \partial D_{\lambda^2}}\uxxobt\cdot \btxzxz,\\
E\sup_{t\le\tau_2}e^{-\phi_t+\frac{1}{2}t}|\eta_t|\le&N\btxzxz,\\
E\sup_{t\le\tau_2}e^{-\phi_t+\frac{1}{2}t}|\xi_t|^2\le&N\btxzxz,\\
E\sup_{t\le\tau_2}e^{-\phi_t+\frac{1}{2}t}|\xi^0_t|^2=&E\sup_{t\le\tau_2}\bigg|\int_0^te^{-\frac{1}{2}\phi_t+\frac{1}{4}t}\pi_sdw_s\bigg|^2\\
\le&4E\int_0^{\tau_2}e^{-\phi_t+\frac{1}{2}t}|\pi_t|^2dt\\
\le&NE\int_0^{\tau_2}e^{-\phi_t+\frac{1}{2}t}|\xi_t|^2dt\\
\le&N\btxzxz,\\
E\sup_{t\le\tau_2}e^{-\phi_t+\frac{1}{4}t}|\xi_t^{d+1}|^2\le&NE\sup_{t\le\tau_2}e^{-\phi_t+\frac{1}{4}t}\bigg(\int_0^t|\xi_s|ds\bigg)^2\\
\le&NE\sup_{t\le\tau_2}e^{-\frac{1}{4}t}\bigg(\int_0^te^{-\frac{1}{2}\phi_s+\frac{1}{4}s}|\xi_s|ds\bigg)^2\\
\le&NE\sup_{t\le\tau_2}e^{-\frac{1}{4}t}\cdot t\int_0^te^{-\phi_s+\frac{1}{2}s}|\xi_s|^2ds\\
\le&NE\int_0^{\tau_2}e^{-\phi_s+\frac{1}{2}s}|\xi_s|^2ds\\
\le&N\btxzxz,\\
E\sup_{t\le\tau_2}e^{-\phi_t+\frac{1}{4}t}|\eta_t^{d+1}|\le&NE\sup_{t\le\tau_2}e^{-\phi_t+\frac{1}{4}t}\int_0^t|\xi_s|^2+|\eta_s|ds\\
\le&NE\int_0^{\tau_2}e^{-\phi_s+\frac{1}{4}s}|\xi_s|^2ds+NE\sup_{t\le\tau_2}e^{-\frac{1}{4}t}\int_0^te^{-\phi_s+\frac{1}{2}s}|\eta_s|ds\\
\le&N\btxzxz+NE\sup_{t\le\tau_2}e^{-\frac{1}{4}t}\cdot\sqrt t\bigg(\int_0^te^{-2\phi_s+s}|\eta_s|^2ds\bigg)^\frac{1}{2}\\
\le&N\btxzxz+NE\bigg(\int_0^{\tau_2}e^{-2\phi_s+s}|\eta_s|^2ds\bigg)^\frac{1}{2}\\
\le&N\btxzxz,\\
E\sup_{t\le\tau_2}e^{-\phi_t+\frac{1}{2}t}|\eta^0_t|\le&E\sup_{t\le\tau_2}\bigg(\Big|\int_0^te^{-\frac{1}{2}\phi_t+\frac{1}{4}t}\pi_sdw_s\Big|^2+\int_0^te^{-\phi_t+\frac{1}{2}t}|\pi_s|^2ds\\
&+\Big|\int_0^te^{-\phi_t+\frac{1}{2}t}\hat{\pi}_sdw_s\Big|\bigg)\\
\le&5E\int_0^{\tau_2}e^{-\phi_s+\frac{1}{2}s}|\pi_s|^2ds+3E\Big(\int_0^{\tau_2}e^{-2\phi_t+t}|\hat\pi_s|^2\Big)^\frac{1}{2}ds\\
\le&N\btxzxz,\\
E\int_0^{\tau_2}e^{-\phi_s}\big(r_s^2+\hat{r}_s\big)ds\le &NE\int_0^{\tau_2}e^{-\phi_s}\big(|\xi_s|^2+|\eta_s|\big)ds\\
\le& N\btxzxz+N\bigg(\int_0^{\tau_2}e^{-s}ds\bigg)^\frac{1}{2}\bigg(\int_0^{\tau_2}e^{-2\phi_s+s}|\eta_s|^2ds\bigg)^\frac{1}{2}\\
\le &N\btxzxz.
\end{align*}
Collecting all estimates above, we conclude that
$$
\uxzxzxz\le\sup_{x\in \partial D_{\lambda^2}}\uxxobt\cdot \btxzxz+N(|g|_{1,D}+|f|_{2,D})\btxzxz.
$$
So for any $x_0\in D_{\lambda^2}$, $\xi_0\in\Rd\setminus\{0\}$, we have
\begin{equation}\label{3p}
\frac{\uxzxzxz}{\btxzxz}\le\sup_{x\in \partial D_{\lambda^2}}\uxxobt+N_2,
\end{equation}
with $N_2$ defined by (\ref{N2}).

Then on $\{x\in D:\psi(x)=\lambda\}$, we have
\begin{align*}
\uxxobw\le&\frac{1}{4}\uxxobt\\
\le&\frac{1}{4}(\sup_{\{\psi=\lambda^2\}}\uxxobt+N_2)\\
\le&\frac{1}{16}\sup_{\{\psi=\lambda^2\}}\uxxobw+\frac{N_2}{4}\\
\le&\frac{1}{16}(\sup_{\{\psi=\lambda\}}\uxxobw+\sup_{\{\psi=\delta\}}\uxxobw+N_2)+\frac{N_2}{4}\\
=&\frac{1}{16}\sup_{\{\psi=\lambda\}}\uxxobw+\frac{1}{16}\sup_{\{\psi=\delta\}}\uxxobw+\frac{5N_2}{16},
\end{align*}
which implies that
\begin{equation}\label{3q}
\sup_{\{\psi=\lambda\}}\uxxobw\le\frac{1}{15}\sup_{\{\psi=\delta\}}\uxxobw+\frac{N_2}{3}.
\end{equation}
Meanwhile, on $\{x\in D:\psi(x)=\lambda^2\}$, we have

\begin{align*}
\uxxobt\le&\frac{1}{4}\uxxobw\\
\le&\frac{1}{4}(\sup_{\{\psi=\lambda\}}\uxxobw+\sup_{\{\psi=\delta\}}\uxxobw+N_2)\\
\le&\frac{1}{4}(\frac{1}{15}\sup_{\{\psi=\delta\}}\uxxobw+\frac{N_2}{3})+\frac{1}{4}\sup_{\{\psi=\delta\}}\uxxobw+\frac{N_2}{4}\\
=&\frac{4}{15}\sup_{\{\psi=\delta\}}\uxxobw+\frac{N_2}{3}.
\end{align*}
Therefore,
\begin{equation}\label{3r}
\sup_{\{\psi=\lambda^2\}}\uxxobt\le\frac{4}{15}\sup_{\{\psi=\delta\}}\uxxobw+\frac{N_2}{3}.
\end{equation}
Combining (\ref{3o}) and (\ref{3q}), we get, for any $x\in D_\delta^\lambda$, $\xi\in\Rd\setminus\{0\}$,
\begin{equation}\label{3s}
\uxxobw\le\frac{16}{15}\sup_{\{\psi=\delta\}}\uxxobw+\frac{4N_2}{3}.
\end{equation}
Combining (\ref{3p}) and (\ref{3r}), we get, for any $x\in D_{\lambda^2}$, $\xi\in\Rd\setminus\{0\}$,
\begin{equation}\label{3t}
\uxxobt\le\frac{4}{15}\sup_{\{\psi=\delta\}}\uxxobw+\frac{4N_2}{3}.
\end{equation}

Thus it remains to estimate
$$
\varlimsup_{\delta\downarrow0}\Big(\sup_{\{\psi=\delta\}}\uxxobw\Big) \mbox{ and }\varlimsup_{\delta\downarrow0}\sup_{x\in\partial D_\delta^\lambda,|\zeta|=1}|u_{(\zeta)}(x)|.
$$

First, notice that
\begin{align*}
\varlimsup_{\delta\downarrow0}\sup_{x\in\partial D_\delta^\lambda,|\zeta|=1}|u_{(\zeta)}(x)|\le&\sup_{x\in\partial D,|\zeta|=1}|u_{(\zeta)}(x)|+\sup_{x\in\{\psi=\lambda\},|\zeta|=1}|u_{(\zeta)}(x)|\\
\le&\sup_{x\in\partial D,|l|=1,l\parallel \partial D}|u_{(l)}(x)|+\sup_{x\in\partial D,|n|=1,n\perp \partial D}|u_{(n)}(x)|\\
&+\sup_{x\in\{\psi=\lambda\},|\zeta|=1}|u_{(\zeta)}(x)|,
\end{align*}
Apply Lemma \ref{lemma4} and first derivative estimate (\ref{3d}), we get
\begin{align*}
\varlimsup_{\delta\downarrow0}\sup_{x\in\partial D_\delta^\lambda,|\zeta|=1}|u_{(\zeta)}(x)|\le&\sup_{x\in\partial D,|l|=1,l\parallel \partial D}|g_{(l)}(x)|+N(|g|_{2,D}+|f|_{0,D})\\
&+N\Big(1+\frac{|\psi|_{1,D}}{\lambda}\Big)(|g|_{1,D}+|f|_{1,D})\\
\le&N(|g|_{2,D}+|f|_{1,D}).
\end{align*}

Second, notice that for each $\delta$, there exist $x(\delta)\in\{\psi=\delta\}$ and $\xi(\delta)\in\{\xi:|\xi|=1\}$, such that
$$
\sup_{\{\psi=\delta\}}\uxxobw=\frac{|u_{(\xi(\delta))(\xi(\delta))}(x(\delta))|}{\mathrm{B}_1(x(\delta),\xi(\delta))}.
$$
A subsequence of $(x(\delta),\xi(\delta))$ converges to some $(y,\zeta)$, such that $y\in\partial D$ and $|\zeta|=1$.

If $\psi_{(\zeta)}(y)\ne0$, then $\mathrm{B}_1(x(\delta),\xi(\delta))\rightarrow\infty$ as $\delta\downarrow0$. In this case,
$$
\varlimsup_{\delta\downarrow0}\Big(\sup_{\{\psi=\delta\}}\uxxobw\Big)=\varlimsup_{\delta\downarrow0}\frac{|u_{(\xi(\delta))(\xi(\delta))}(x(\delta))|}{\mathrm{B}_1(x(\delta),\xi(\delta))}=0.
$$

If $\psi_{(\zeta)}(y)=0$, then $\zeta$ is tangential to $\partial D$ at $y$. In this case,
$$
\varlimsup_{\delta\downarrow0}\Big(\sup_{\{\psi=\delta\}}\uxxobw\Big)=\varlimsup_{\delta\downarrow0}\frac{|u_{(\xi(\delta))(\xi(\delta))}(x(\delta))|}{\mathrm{B}_1(x(\delta),\xi(\delta))}=\frac{|g_{(\zeta)(\zeta)}(y)|+K|u_{(n)}(y)|}{\lambda}.$$
By Lemma (\ref{lemma4}), we have
\begin{equation*}
\frac{|g_{(\zeta)(\zeta)}(y)|+K|u_{(n)}(y)|}{\lambda}\le N(|g|_{2,D}+|f|_{0,D}).
\end{equation*}
Therefore, we have
$$
\frac{|u_{(\xi)(\xi)}(x)|}{\mathrm{B}_1(x,\xi)}\le N(|f|_{2,D}+|g|_{2,D}), \mbox{ when }x\in D^\lambda;
$$
$$
\frac{|u_{(\xi)(\xi)}(x)|}{\mathrm{B}_2(x,\xi)}\le N(|f|_{2,D}+|g|_{2,D}), \mbox{ when }x\in D_{\lambda^2}.
$$
It follows that, for any $x\in D$ and $\xi\in\Rd$,
$$|u_{(\xi)(\xi)}(x)|\le N(|\xi|^2+\frac{\psi_{(\xi)}^2}{\psi})(|f|_{2,D}+|g|_{2,D}).$$
The inequality (\ref{3dd}) is proved.

\end{proof}

\begin{proof}[Proof of the existence and uniqueness of (\ref{solva})]
The fact that $u$ given by (\ref{1b}) satisfies (\ref{solva}) follows from Theorem 1.3 in \cite{MR617995}.

To prove the uniqueness, assume that $u_1, u_2\in C_{loc}^{1,1}(D)\cap C^{0,1}(\bar D)$ are solutions of (\ref{solva}).  Let $\Lambda=|u_1|_{0,D}\vee|u_2|_{0,D}$. For constants $\delta$ and $\ve$ satisfying $0<\delta<\ve<1$, define
$$\Psi(x,t)=\ve(1+\psi(x))\Lambda e^{-\delta t},\ U(x,t)=u(x) e^{-\ve t}\mbox{ in }\bar D\times(0,\infty),$$
$$F[U]=U_t+LU-cU+f \mbox{ in } D\times (0,\infty).$$

Notice that a.e. in $D$, we have
$$F[U_1-\Psi]=-\ve e^{-\ve t}u_1+\delta\Psi-\ve\Lambda e^{-\delta t}L\psi+c\Psi\ge\ve\Lambda(e^{-\delta t}-e^{-\ve t})\ge0, $$
$$F[U_2+\Psi]=\ve e^{-\ve t}u_2-\delta\Psi+\ve\Lambda e^{-\delta t}L\psi-c\Psi\le\ve\Lambda(e^{-\ve t}-e^{-\delta t})\le0.$$
On $\partial D\times (0,\infty)$, we have
$$U_1-U_2-2\Psi=-2\Psi\le0.$$
On $\bar D\times T$, where $T=T(\ve,\delta)$ is a sufficiently large constant, we have
$$U_1-U_2-2\Psi=(u_1-u_2)e^{-\ve T}-2\ve(1+\psi)\Lambda e^{-\delta T}\le2\Lambda(e^{-\ve T}-\ve e^{-\delta T})\le 0.$$

Applying Theorem 1.1 in \cite{MR538554}, we get
$$U_1-U_2-2\Psi\le 0 \mbox { a.e. in } \bar D\times(0,T).$$
It follows that
$$u_1-u_2\le 2\ve(1+\psi)\Lambda e\rightarrow0, \mbox{ as } \ve\rightarrow0, \mbox{ a.e. in }D.$$
Similarly, $u_2-u_1\le0$ a.e. in $D$. The uniqueness is proved.

\end{proof}

\begin{remark}
Based on our proof, if we replace $\sigma(x), b(x), c(x), f(x)$ and $g(x)$ in (\ref{1aa}) and (\ref{1b}) by $\sigma(\omega,t,x),b(\omega,t,x),c(\omega,t,x), f(\omega,t,x)$ and $g(\omega,t,x)$, defined on $\Omega\times[0,\infty)\times D$, under appropriate measurable assumptions, the first and second derivative estimates (\ref{3d}) and (\ref{3dd}) are still true.
\end{remark}

\section*{Acknowledgements} The author is sincerely grateful to his advisor, N.V. Krylov, for introducing this method to the author and giving many useful suggestions and comments on the improvements. The author also would like to thank the anonymous referees whose helpful comments improved the quality of this manuscript.


\end{document}